\documentclass{amsart}
\usepackage[a4paper, margin=3cm]{geometry}
\usepackage{graphicx} 
\usepackage{amsmath}
\usepackage{hyperref}

\def\Dom{\text{\normalfont Dom}}
\newcommand{\A}{\mathcal{L}_a^s}

\newcommand{\R}{{\mathbb R}}

\newcommand{\N}{{\mathbb N}}

\newcommand{\ve}{\varepsilon}
\newcommand{\EE}{{\mathcal A}^s}
\newcommand{\G}{{\mathcal G}}
\newcommand{\LL}{{\mathcal L}}

\newtheorem{teo}{Theorem}[section]
\newtheorem{lema}[teo]{Lemma}
\newtheorem{prop}[teo]{Proposition}

\theoremstyle{remark}

\theoremstyle{definition}
\newtheorem{defi}[teo]{Definition}

\newtheorem{conditions}[teo]{Conditions}
\def\diver{\mathop{\text{\normalfont div}}}

\numberwithin{equation}{section}
\begin{document}
\title[Nonlocal quasilinear anisotropic equation]{An eigenvalue problem for a nonlocal quasilinear anisotropic equation in fractional Orlicz Sobolev spaces without the $\Delta_2$--condition}
\author{Juli\'an Fern\'andez Bonder}

\address[JFB]{Departamento de Matem\'atica, FCEN, Universidad de Buenos Aires and \hfill\break\indent
Instituto de C\'alculo, CONICET \hfill\break\indent
Ciudad Universitaria, $0+\infty$ Building, Intendente Güiraldes 2160, 
Buenos Aires, Argentina}

\email{jfbonder@dm.uba.ar}

\urladdr{http://mate.dm.uba.ar/~jfbonder/}

\author{Mart\'in S. Guzm\'an}

\address[MG]{Departamento de Matem\'atica, FCFMyN, Universidad Nacional de San
Luis \hfill\break\indent Instituto de Matem\'atica Aplicada San
Luis, IMASL, CONICET. \hfill\break\indent Italia avenue 1556, office
5, San Luis (5700), San Luis, Argentina.}

\email{msguzman@unsl.edu.ar}

\author{Juan F. Spedaletti}

\address[JS]{Departamento de Matem\'atica, FCFMyN, Universidad Nacional de San
Luis \hfill\break\indent Instituto de Matem\'atica Aplicada San
Luis, IMASL, CONICET. \hfill\break\indent Italia avenue 1556, office
5, San Luis (5700), San Luis, Argentina.}

\email{jfspedaletti@unsl.edu.ar}

\subjclass[2020]{35J62; 35P30; 46E30}

\keywords{Orlicz spaces, nonlinear eigenvalues, asymptotic behavior}
\begin{abstract}
In this paper we analyze an eigenvalue problem associated to fractional operators of the form
$$
\A u(x)=2\text{p.v.}\int_{\R^n}a(x,y,D^su(x,y))\,\frac{dy}{|x-y|^{n+s}},
$$
which represents a generalization model for nonlocal, nonstandard growth diffusion problems. We study this problem in the context of the fractional Orlicz Sobolev spaces without assuming the so-called $\Delta_2$--condition on the Young functions involved. We show existence of a sequence of eigenpairs $(u_k,\lambda_k)\to (0,+\infty)$.
\end{abstract}
\maketitle

\section{Introduction}
Eigenvalue problems are among the most important topics in Partial Differential Equations due to their wide range of applications in both pure and applied mathematics. See, for instance, the classical books by Courant and Hilbert \cite{CourantHilbertV1, CourantHilbertV2}.

In particular, for linear elliptic operators such as the Laplacian, the study of the asymptotic behavior of eigenvalues dates back to Courant. The classical Courant minimax principle ensures the existence of an infinite sequence of eigenvalues $\{\lambda_k\}_{k\in \N}$ for the Dirichlet eigenvalue problem
$$
\begin{cases}
-\Delta u = \lambda u & \text{in }\Omega\\
u=0 & \text{in } \partial\Omega,
\end{cases}
$$
where $\lambda_k \to \infty$ and $\Omega \subset \R^n$ is a bounded open set.

In the early 1990s, attention shifted to nonlinear extensions of eigenvalue problems associated with the Laplacian. One of the most extensively studied examples is the eigenvalue problem for the $p$-Laplacian:
\begin{equation}\label{p-lap-eigen}
\begin{cases}
-\Delta_p u = \lambda |u|^{p-2}u & \text{in }\Omega,\\
u=0 & \text{on } \partial\Omega,
\end{cases}
\end{equation}
introduced in \cite{L} (see also \cite{L2, L3}).

A standard approach to deal with nonlinear eigenvalue problems such as \eqref{p-lap-eigen} is based on the Ljusternik--Schnirelmann theory, introduced in the early 1930s by Ljusternik and Schnirelmann \cite{LS}. This theory provides a method for finding critical points of differentiable functionals on finite-dimensional Riemannian manifolds. Its extension to infinite-dimensional settings was developed by Schwartz \cite{Schwartz1,Schwartz} for Riemannian manifolds modeled on Hilbert spaces, and by Palais \cite{Palais} for Finsler manifolds modeled on Banach spaces.

These ideas have been widely used to establish the existence of infinitely many distinct solutions for nonlinear PDEs. A very incomplete list of contributions includes \cite{Berger, Browder1, Browder2, Browder3, Browder4, Coffman1, Coffman2, Coffman3, Krasnoselskii1956, Vainberg}.

To apply the Ljusternik--Schnirelmann theory to problem \eqref{p-lap-eigen}, one considers the functionals $F$ and $G$ defined on $W_0^{1,p}(\Omega)$ by
$$
F(u)=\int_\Omega |\nabla u|^p\,dx\qquad  \text{ and }\qquad G(u)=\int_\Omega|u|^p\,dx.
$$
In this framework, one obtains the existence of a sequence of eigenvalues $\{\lambda_k\}_{k\in\N}$ with $\lambda_k \to \infty$. A crucial point is that $W_0^{1,p}(\Omega)$ is reflexive and separable, and the functionals $F$ and $G$ are differentiable (see \cite{FuNe, FuNeSoSo, Zei80}).

Another interesting nonlinear eigenvalue problem arises with the $m-$Laplacian operator, defined by
$$
\Delta_m u = \diver\left(\frac{m(|\nabla u|)}{|\nabla u|}\nabla u\right),
$$
where $m\colon \R_+\to\R_+$ is a nondecreasing function. This operator generalizes the $p-$Laplacian operator when $m(t)=t^{p-1}$.

The corresponding eigenvalue problem reads
\begin{equation}\label{intro.1}
\begin{cases}
-\Delta_m u=\lambda g(u)& \text{in }\Omega\\
u=0 & \text{in }\partial\Omega.
\end{cases}
\end{equation}
Here, the function $g\colon \R \to \R$ satisfies suitable growth conditions. These operators are particularly appealing in applications due to the possibility of different diffusion behaviors when $|\nabla u|\ll 1$ and $|\nabla u|\gg 1$, a phenomenon commonly referred to as \emph{nonstandard growth}.

A key role is played by the primitive function of $m$, defined as $M(t)=\int_0^t m(s),ds$. When $M$ satisfies the $\Delta_2$-condition, namely
$$
M(2t)\leq C M(t),
$$
for some constant $C>1$ and all $t\geq T_0$, problem \eqref{intro.1} shares many properties with the $p$-Laplacian case. In particular, the Ljusternik--Schnirelmann theory can be applied to obtain a sequence of eigenvalues.

However, when $M$ does not satisfy the $\Delta_2$-condition, the situation becomes significantly more delicate. In this case, the associated functional spaces are no longer separable or reflexive, and the corresponding functionals fail to be differentiable. In \cite{Ti}, the author studied problem \eqref{intro.1} by combining ideas from \cite{Gossez} with a Galerkin-type approximation method, overcoming the lack of the $\Delta_2$-condition. As a consequence, the Ljusternik--Schnirelmann theory can still be applied, yielding the existence of an infinite sequence of eigenvalues for \eqref{intro.1}.

Recently, considerable attention has been devoted to the study of nonlocal diffusion problems due to their numerous applications in the natural sciences. See \cite{AkgirayBooth1988, Constantin2006, Eringen2002, GilboaOsher2008}. Arguably, one of the most relevant nonlocal operators is the \emph{fractional Laplacian}, defined by
$$
(-\Delta)^s u(x) = C(n, s)\text{p.v.}\int_{\R^n} \frac{u(x)-u(y)}{|x-y|^{n+2s}}\, dy.
$$
A natural question in this context is the study of the corresponding eigenvalue problem and its relationship with the classical Laplacian. This problem has been addressed by several authors, and a comprehensive treatment can be found in \cite{MolicaBisciRadulescuServadei2016}.

Nonlinear extensions of this problem were considered subsequently. In particular, in \cite{LindgrenLindqvist2014, FP} the authors study the eigenvalue problem associated with the fractional $p$-Laplacian, defined as
$$
(-\Delta_p)^s u(x) = C(n, s, p)\text{p.v.}\int_{\R^n} \frac{|u(x)-u(y)|^{p-2}(u(x)-u(y))}{|x-y|^{n+sp}}\, dy.
$$
In this setting, the classical Ljusternik--Schnirelmann theory can be applied, since the associated functional spaces, namely the fractional Sobolev spaces $W^{s,p}_0(\Omega)$, are reflexive and separable, and the corresponding functionals are differentiable.

More recently, in \cite{FBSP}, the authors analyzed an eigenvalue problem associated with the fractional $m$-Laplacian operator defined by
$$
(-\Delta_m)^s u(x)=  \text{p.v.}\int_{\R^n} m\left ( \frac{u(x)-u(y)}{|x-y|^s}\right )\,\frac{dxdy}{|x-y|^{n+s}},
$$ 
where $m=M'$ and $M$ is a Young function. Note that when $m(t)=|t|^{p-2}t$, this operator reduces to the fractional $p$-Laplacian.

The corresponding eigenvalue problem is
\begin{equation}\label{intro.2}
\begin{cases}
(-\Delta_m)^s u=\lambda g(u)& \text{in }\Omega\\
u=0 & \text{in }\R^n\setminus \Omega,
\end{cases}
\end{equation}
where the function $g\colon \R\to \R$ satisfies suitable growth conditions. In \cite{FBSP}, without assuming the $\Delta_2$-condition on either $M$ or its conjugate $\bar{M}$, the authors employ a Galerkin-type method to obtain a countable sequence of distinct eigensolutions ${(u_k,\lambda_k)}_{k\in\N}$ to problem \eqref{intro.2}. Moreover, it is shown that $\lambda_k \to +\infty$ as $k\to\infty$.

The aim of the present paper is to generalize the results in \cite{FBSP} by considering a class of anisotropic fractional operators $\A$ defined by
$$
\A u(x)=2\text{p.v.}\int_{\R^n} a\left (x,y,\frac{u(x)-u(y)}{|x-y|^s}\right)\,\frac{dy}{|x-y|^{n+s}},
$$
where the function $a$ is odd, symmetric, monotone, and satisfies suitable growth conditions (see Conditions \ref{condicionesa} for precise assumptions).

In this framework, we study the associated nonlocal eigenvalue problem
\begin{equation}\label{intro.3}
\begin{cases}
\A u=\lambda g(u)& \text{in }\Omega\\
u=0 & \text{in }\R^n\setminus \Omega,
\end{cases}
\end{equation}
where $g\colon \R\to\R$ satisfies appropriate growth conditions.

Our main result establishes the existence of a sequence of eigenvalues ${\lambda_k}_{k\in\N}$ for problem \eqref{intro.3}, such that $\lambda_k \to +\infty$ as $k\to\infty$ (see Theorem \ref{teo.main}).

\subsection*{Organization of the paper}
The paper is organized as follows: after this introduction, in Section \ref{prel} we include the preliminaries on Young functions, Orlicz spaces and fractional Orlicz-Sobolev spaces. Even though the content is not new, there is some not so standard notation and terminology that will be used throughout the paper. In Section \ref{sec:operators} we precisely define the operators $\A$ and prove some of the main properties of these operators, namely the pseudo-monotonicity (see Theorem \ref{pseudo}). Finally, Section \ref{eigenproblem} we analyze the eigenvalue problem \eqref{intro.3} and prove our main result about the existence of eigenpairs, Theorem \ref{teo.main}.

\section{Preliminaries}\label{prel}

In this section we present some preliminary definitions needed for the rest of the paper. The first subsection is well known and does not contain any new result the book \cite{KR} being the standard reference for the subject. The second subsection contains the definitions and basic results regarding fractional Orlicz-Sobolev spaces. See for instance \cite{FBS} where these spaces were introduced and \cite{ACPS2, ACPS} where several properties of these spaces were analyzed without requiring the $\Delta_2$--condition. The third subsection recalls the definition of complementary pairs introduced in \cite{Gossez} and the definition of the segment property, and construct a complementary pair in the context of fractional Orlicz-Sobolev spaces.  In the fourth subsection we recall an abstract result due to \cite{Ti} that will be helpful in the sequel. Finally, in the fifth subsection we state the definition of a deformation of a set that will play a key role in the regularity of the level sets that we will use in section \ref{eigenproblem}.  

\subsection{Young functions and Orlicz spaces}

Let $M\colon \R\to\R$ be an even, convex and continuous function, such that $M(t)>0$ for $t>0$, $M(t)/t\to 0$ as $t\to 0$ and $M(t)/t\to\infty$ as $t\to\infty$. Such a function $M$ is called a {\em Young function} if it can be written as
$$
M(t) = \int_0^{|t|} m(s)\, ds,
$$
for $m\colon [0,\infty)\to [0,\infty)$ increasing, right continuous with $m(t)=0$ if and only if $t=0$ and $m(t)\to\infty$ as $t\to\infty$.

It will be helpful to extend the function $m$ to the entire real line by oddness, that is $m(t) = \text{sign}(t) m(|t|)$.

We recall now some basic definitions on Orlicz spaces that can be found, for instance, in \cite{KR}.

Let $U\subset \R^N$ be a bounded domain and let $\mu$ be a Borel measure in $U$. The Orlicz class $\LL_M(U, d\mu)$ is defined as
$$
\LL_M(U,d\mu) := \left\{u\colon U\to \R,\ \text{measurable}\colon \int_U M(u)\, d\mu < \infty\right\}.
$$
The Orlicz space $L_M(U,d\mu)$ is then defined as the linear hull of $\LL_M(U,d\mu)$. It follows that $L_M(U,d\mu)$ can be characterized as
$$
L_M(U,d\mu) = \left\{ u\colon U\to \R,\ \text{$\mu-$measurable}\colon \int_U M\left (\frac{u}{k}\right )\, d\mu < \infty, \text{ for some } k>0\right\}.
$$
This space is a Banach space when it is equipped, for instance, with the {\em Luxemburg norm}, i.e.
$$
\|u\|_{L_M(U,d\mu)} = \|u\|_{M,U,d\mu} = \|u\|_{M,d\mu} := \inf\left\{ k>0\colon \int_U M\left(\frac{u}{k}\right)\, d\mu \leq 1\right\}.
$$
A well-known and interesting fact is that $\LL_M(U,d\mu) = L_M(U,d\mu)$ if and only if $M$ satisfies the so-called $\Delta_2$--condition, i.e.
\begin{equation}\label{Delta2}
M(2t)\le CM(t),\quad \text{for } t\ge T.
\end{equation}
Also, the Orlicz space $L_M(U,d\mu)$ is separable if and only if $M$ satisfies \eqref{Delta2}.

Next, we define the space $E_M(U,d\mu)$ as the closure of bounded $\mu-$measurable functions in $L_M(U,d\mu)$, in the case $\mu(U)=\infty$ the space $E_M(U,d\mu)$ is the closure in $L_M(U,d\mu)$ of bounded $\mu-$measurable functions with bounded support. It is easy to see that
$$
E_M(U,d\mu) = \left\{ u\colon U\to \R,\ \text{$\mu-$measurable}\colon \int_U M\left (\frac{u}{k}\right )\, d\mu < \infty, \text{ for every } k>0\right\}.
$$
Again, $E_M(U,d\mu)=L_M(U,d\mu)$ if and only if $M$ satisfies \eqref{Delta2}.

So, in general, we have
$$
E_M(U,d\mu)\subset \LL_M(U,d\mu)\subset L_M(U,d\mu),
$$
with equalities if and only if $M$ satisfies \eqref{Delta2}.

Observe that $E_M(U,d\mu)$ and $L_M(U,d\mu)$ are Banach spaces and $\LL_M(U,d\mu)$ is a convex set.

\bigskip

Given a Young function $M$, we define its complementary function $\bar M$ as
$$
\bar M(t) := \sup\{ \tau|t|-M(\tau)\colon \tau\geq 0\}.
$$
Observe that $\bar M$ is also a Young function and it is the optimal function in the Young inequality
\begin{equation}\label{young}
\tau t\le M(t) + \bar M(\tau),
\end{equation}
for all $\tau,t\in \R$ . Observe that equality in \eqref{young} is achieved if and only if $\tau=\text{sign}(t)m(t)$ or $t=\text{sign}(\tau)\bar m(\tau)$ where $\bar{m}(t)$ is the derivative of $\bar{M}(t)$. 

It follows directly from \eqref{young} that if $u\in L_M(U,d\mu)$ and $v\in L_{\bar M}(U,d\mu)$, then $uv\in L^1(U)$ and
$$
\int_U |uv|\, d\mu \le 2 \|u\|_M \|v\|_{\bar M}.
$$
This fact allows one to define in $L_M(U,d\mu)$ the topology $\sigma(L_M,L_{\bar M})$ and it follows that $E_M(U,d\mu)$ is dense in $L_M(U,d\mu)$ in this topology.

It is easy to check that $\bar{\bar M} = M$. The Orlicz space $L_{\bar M}(U,d\mu)$ is the dual space of $E_M(U,d\mu)$ and so $L_M(U,d\mu)$ is reflexive if and only if $M$ and $\bar M$ satisfy \eqref{Delta2}.

Finally, given $M$ a Young function, we define
\begin{equation}\label{Domm}
\Dom(m) := \{u\in L_M(U,d\mu)\colon m(|u|)\in L_{\bar M}(U,d\mu)\}.
\end{equation}
It can be checked that $E_M(U,d\mu)\subset \Dom(m)\subset \LL_M(U,d\mu)$ and hence, $\Dom(m)=L_M(U,d\mu)$ if and only if $M$ satisfies \eqref{Delta2}. Moreover, the map $u\mapsto m(u)$ from $E_M(U,d\mu)$ to $L_{\bar M}(U,d\mu)$ is continuous if and only if $\bar M$ satisfies \eqref{Delta2}.

\subsection{Fractional order Orlicz-Sobolev spaces}
In the product space $\R^n\times\R^n = \R^{2n}$ we define the measure
$$
d\nu_n := \frac{dxdy}{|x-y|^n}.
$$
Observe that this is a Borel measure and that if $K\subset \R^{2n}\setminus \Delta$ is compact, then $\nu_n(K)<\infty$, where $\Delta\subset \R^n\times\R^n$ is the diagonal $
\Delta := \{(x,x)\colon x\in\R^n\}$.

We will consider two Orlicz spaces $L_M(U,d\mu)$. One with $U=\R^n$ and $d\mu=dx$ (the Lebesgue measure) and another with $U=\R^{2n}$ and $d\mu = d\nu_n$. 

We will use the notations
$$
\LL_M = \LL_M(\R^n, dx), \qquad L_M = L_M(\R^n, dx),\qquad E_M = E_M(\R^n, dx);
$$
$$
\LL_M(\nu_n) = \LL_M(\R^{2n}, d\nu_n), \quad L_M(\nu_n) = L_M(\R^{2n}, d\nu_n),\quad E_M(\nu_n) = E_M(\R^{2n}, d\nu_n).
$$
Now, given a fractional parameter $s\in (0,1)$, we introduce the notation for the $s-$H\"older quotient of a function $u\colon\R^n\to \R$.
$$
D^su(x,y) := \frac{u(x)-u(y)}{|x-y|^s}.
$$
Then $D^su\colon \R^{2n}\setminus \Delta\to \R$.

Now, with all the notation introduced, the fractional order Orlicz-Sobolev spaces are defined as
$$
W^sL_M := \{u\in L_M\colon D^s u\in L_M(\nu_n)\}
$$
and
$$
W^sE_M :=\{u\in E_M\colon D^su\in E_M(\nu_n)\}.
$$
These spaces are naturally equipped with the norms
$$
\|u\|_{s,M} = \|u\|_M + \|D^su\|_{M,\nu_n}.
$$

Also, these spaces can be isometrically identified as closed subspaces of $L_M\times L_M(\nu_n)$ and $E_M\times E_M(\nu_n)$ respectively using the map
$$
u\mapsto (u, D^su).
$$
Now, given $\Omega\subset\R^n$ a bounded open set, the space $W^s_0L_M(\Omega)$ is then defined as the closure of ${\mathcal D}(\Omega)$ in $W^sL_M$ with respect to the topology $\sigma(L_M\times L_M(\nu_n), E_{\bar M}\times E_{\bar M}(\nu_n))$. By Poincar\'e's inequality (see \cite[Corollary 6.2]{FBS}) in  $W_0^s L_M(\Omega)$ the quantity $\|D^su\|_{M,\nu_n}$ is equivalent to $\|u\|_{s, M}$. Therefore in $W_0^s L_M(\Omega)$ we will consider the norm $\|u\|_{W^s_0L_M(\Omega)} = \|D^su\|_{M,\nu_n}$.

The space $W^s_0 E_M(\Omega)$ is defined as the closure of ${\mathcal D}(\Omega)$ in $W^sE_M$ in norm topology.

In order to define the dual spaces, we need to introduce the notion of fractional divergence. See \cite{FBSaRi}.

Given $F\in L_{\bar{M}}(\nu_n)$, the fractional divergence of $F$ is defined as
\begin{align*}
{\mathrm{div}}^s F(x) &:= \text{p.v.} \int_{\R^n} \frac{F(y,x)-F(x,y)}{|x-y|^{n+s}}\, dy \\
&= \lim_{\ve\to 0} \int_{\R^n\setminus B_\ve(x)}\frac{F(y,x)-F(x,y)}{|x-y|^{n+s}}\, dy.
\end{align*}
In \cite{FBPLS} it is shown that if $F\in L_{\bar M}(\nu_n)$, then ${\mathrm{div}}^s F\in (W^s_0L_M(\Omega))^*$ and the following {\em fractional integration by parts formula} holds
\begin{equation}\label{int.by.parts}
\langle {\mathrm{div}}^s F, u\rangle = -\iint_{\R^{2n}} F D^s u\, d\nu_n.
\end{equation}
So, we define the following spaces of distributions
$$
W^{-s}L_{\bar M}(\Omega) := \{\phi\in {\mathcal D}'(\Omega)\colon \phi = f + \mathrm{div}^sF \text{ with } f\in L_{\bar M},\ F\in L_{\bar M}(\nu_n)\}
$$
$$
W^{-s}E_{\bar M}(\Omega) := \{\phi\in {\mathcal D}'(\Omega)\colon \phi = f + \mathrm{div}^sF \text{ with } f\in E_{\bar M},\ F\in E_{\bar M}(\nu_n)\}.
$$
Recall that since $E_{\bar{M}}$ and $E_{\bar{M}}(\nu_n)$ are separable then $W^{-s}E_{\bar M}(\Omega)$ is also separable.

These spaces are endowed with the usual quotient norms,
$$
\|\phi\|_{-s, \bar M} := \inf\{\|f\|_{\bar M} + \|F\|_{\bar M, \nu_n}\colon \phi=f+\mathrm{div}^s F\}.
$$

\subsection{Complementary systems and segment property}\label{complementary}

In \cite{Do, DoTru} the authors introduce the notion of complementary systems in order to work in spaces without the usual reflexivity assumption.

Let $Y$ and $Z$ be real Banach spaces with a pairing $\langle\cdot,\cdot\rangle\colon Y\times Z\to \R$. Let $Y_0\subset Y$ and $Z_0\subset Z$ be closed and separable subspaces. We say $(Y, Y_0; Z, Z_0)$ is a complementary system if $Y_0^*=Z$ and $Z_0^*=Y$ (where equality is understood in the sense of a natural isometry via the pairing $\langle\cdot,\cdot\rangle$).

The first natural example of a complementary system is $Y=L_M$, $Y_0=E_M$, $Z=L_{\bar M}$ and $Z_0=E_{\bar M}$.

Observe that it is immediate to see that
\begin{equation}\label{LMEM}
\left (L_M\times L_M(\nu_n),E_M\times E_M(\nu_n); L_{\bar M}\times L_{\bar M}(\nu_n),E_{\bar M}\times E_{\bar M}(\nu_n)
\right)
\end{equation}
is also a complementary system.

In \cite{Gossez} the author provides with a general method to generate complementary systems from a previous one. More precisely
\begin{lema}
	[\cite{Gossez}, Lemma 1.2]\label{lemaGossez}
Given a complementary system $(Y, Y_0; Z, Z_0)$ and a closed subspace $E$ of $Y$, define $E_0= E \cap Y_0$, $F = Z/E_0^\perp$ and $F_0 = Z_0/E_0^\perp$.

Then, the pairing $\langle\cdot,\cdot\rangle$ between $Y$ and $Z$ induces a pairing between $E$ and $F$ if and only if $E_0$ is $\sigma(Y,Z)$ dense in $E$ that is if $u\in E$ then there exists $\{u_m\}_{m\in\N}\in E_0$ such that $\langle u_m,v\rangle\to \langle u,v\rangle$ if $m\to \infty, \forall v\in Z$. In this case, $(E, E_0; F, F_0)$ is a complementary system if $E$ is $\sigma(Y, Z_0)$ closed, and conversely, when $Z_0$ is complete, $E$ is $\sigma(Y, Z_0)$ closed if $(E, E_0; F, F_0)$ is a complementary system.
\end{lema}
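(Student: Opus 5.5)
The plan is to argue through annihilators and the bipolar theorem. The duality $Y_0^*=Z$, $Z_0^*=Y$ lets us move between the weak topologies $\sigma(Y,Z)$, $\sigma(Y,Z_0)$ and $\sigma(Z,Z_0)$.

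\emph{Step 1: the induced pairing.} Set $\langle u,[v]\rangle:=\langle u,v\rangle$ for $u\in E$ and $[v]\in F=Z/E_0^\perp$. This is well defined exactly when every $v\in E_0^\perp$ annihilates all of $E$, i.e.\ when $E_0^\perp\subset E^\perp$ (the reverse inclusion is automatic).
\begin{itemize}
\item Suppose $E_0$ is $\sigma(Y,Z)$-dense in $E$. If $v\in E_0^\perp$ and $u\in E$, take $u_m\in E_0$ with $\langle u_m,v\rangle\to\langle u,v\rangle$. Since each $\langle u_m,v\rangle=0$, we get $\langle u,v\rangle=0$.
\item Conversely, suppose $E_0^\perp\subset E^\perp$. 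The $\sigma(Y,Z)$-closure of the subspace $E_0$ is ${}^\perp(E_0^\perp)$, by Hahn--Banach/bipolar in the dual pair $(Y,Z)$. This set then contains ${}^\perp(E^\perp)\supset E$, which gives density.
\end{itemize}

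\emph{Step 2: $(E,E_0;F,F_0)$ is a complementary system when $E$ is $\sigma(Y,Z_0)$ closed.} First, $E_0=E\cap Y_0$ is closed in $Y_0$, hence separable. The space $F_0=Z_0/E_0^\perp$ should be read as $Z_0/(E_0^\perp\cap Z_0)$, which is separable as a quotient.
\begin{itemize}
\item \emph{Identification $E_0^*=F$.} Since $Y_0^*=Z$, the standard duality for subspaces gives $E_0^*\cong Z/E_0^\perp=F$ isometrically, via Hahn--Banach.
\item \emph{Identification $F_0^*=E$.} Since $Z_0^*=Y$, we have $F_0^*\cong (E_0^\perp\cap Z_0)^\perp$, the annihilator taken in $Y$. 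It therefore suffices to show $(E_0^\perp\cap Z_0)^\perp=E$.
\end{itemize}
The inclusion $\supset$ follows from Step 1. For $\subset$, the set $(E_0^\perp\cap Z_0)^\perp$ is the $\sigma(Y,Z_0)$-closure of $E_0$, by the bipolar theorem in the pair $(Y,Z_0)$. This closure lies in $E$ because $E_0\subset E$ and $E$ is $\sigma(Y,Z_0)$ closed.

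One must also check that the isometries agree with the induced pairing. This is immediate from the constructions.

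\emph{Step 3: the converse.} Assume $Z_0$ is complete and $(E,E_0;F,F_0)$ is complementary. Then $E=F_0^*=(E_0^\perp\cap Z_0)^\perp$, and any annihilator of a subset of $Z_0$ is $\sigma(Y,Z_0)$ closed. Completeness enters in identifying $F_0^*$ with this annihilator inside $Z_0^*=Y$. Here $F_0$ must be a genuine Banach quotient, so that its dual is the annihilator.

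The main obstacle I expect is the bookkeeping in Step 2. One has to distinguish $E_0^\perp$ taken in $Z$ from its trace on $Z_0$, and to justify that the $\sigma(Y,Z_0)$-closure of $E_0$ is still contained in $E$ (and is all of $E$). This is exactly where the hypothesis that $E$ is $\sigma(Y,Z_0)$ closed is used, combined with the density from Step 1 transferred to the weaker topology $\sigma(Y,Z_0)$.
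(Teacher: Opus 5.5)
The paper gives no proof of this lemma; it is quoted from Gossez. Your annihilator/bipolar strategy is the standard one. Step~1 is correct, and so is the identity $E=(E_0^\perp\cap Z_0)^\perp$ (annihilator in $Y$) under $\sigma(Y,Z_0)$-closedness.

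One caveat on Step~1. Its converse gives density for the topology $\sigma(Y,Z)$, i.e.\ with nets, which is Gossez's formulation. It does not give the sequential version written in the statement, and no argument can give that in this generality. Take $Y=\ell^1\oplus\R$, $Y_0=\ell^1$, $Z=\ell^\infty$, $Z_0=c$, with $\langle (a,t),z\rangle=\sum_n a_nz_n+tL(z)$ for a Banach limit $L$, and let $E=Y$. Then $(0,1)$ is not a $\sigma(Y,Z)$-limit of a sequence in $\ell^1$. Indeed, $\ell^\infty$ is a Grothendieck space, so weak$^*$-sequential limits of elements of $\ell^1$ stay in $\ell^1$, whereas $L\notin\ell^1$.

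\textbf{The genuine gap is the norm on $F_0$.} In a complementary system $F_0$ must be a closed subspace of $F=Z/E_0^\perp$. So $[v]\in F_0$ carries $\|[v]\|_F=\operatorname{dist}(v,E_0^\perp)$, which is a priori smaller than the quotient norm $q([v])=\operatorname{dist}(v,E_0^\perp\cap Z_0)$ whose dual you compute.

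In the forward direction this is easy to repair:
\begin{itemize}
\item By Step~1 every $u\in E$ annihilates all of $E_0^\perp$, so $|\langle u,v\rangle|\le C\|u\|\operatorname{dist}(v,E_0^\perp)$, where $C$ is the norm of the pairing.
\item Hahn--Banach together with your identity gives $q([v])=\sup\{\langle u,v\rangle\colon u\in E,\ \|u\|\le 1\}$.
\item Hence $\|\cdot\|_F\le q\le C\|\cdot\|_F$. Since $(F_0,q)$ is a Banach quotient of $Z_0$, $F_0$ is complete, hence closed in $F$, and $(F_0,\|\cdot\|_F)^*=E$.
\end{itemize}

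The converse is not handled. With the correct norm, $F_0^*=E$ only says that $E$ is the set of $u\in Y$ with $|\langle u,v\rangle|\le C_u\operatorname{dist}(v,E_0^\perp)$ for all $v\in Z_0$. That is a union of $\sigma(Y,Z_0)$-closed sets, not an annihilator. Your account of where completeness enters is also wrong: $(X/N)^*=N^\perp$ for any normed space $X$. This shows that under your reading the hypothesis on $Z_0$ is never used.

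What is needed is Krein--\v{S}mulian in $Y=Z_0^*$, which requires $Z_0$ to be Banach. Let $u_\alpha\in E$ with $\|u_\alpha\|\le 1$ and $u_\alpha\to u$ in $\sigma(Y,Z_0)$. The bound $|\langle u_\alpha,v\rangle|\le C\operatorname{dist}(v,E_0^\perp)$ passes to the limit, so $u$ defines an element of $F_0^*$. By hypothesis this element is represented by some $u'\in E$, and $u=u'$ because $Z_0$ separates $Y$. Thus $E\cap B_Y$ is $\sigma(Y,Z_0)$-closed, and therefore so is $E$.

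Alternatively, use the open mapping theorem on the identity $(F_0,q)\to(F_0,\|\cdot\|_F)$. Both spaces are Banach: the first by completeness of $Z_0$, the second because $F_0$ is closed in $F$ by definition. This gives equivalent norms, and then $E=(E_0^\perp\cap Z_0)^\perp$.
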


Using this Lemma, in \cite{Gossez} it is shown that 
$$
(W^1_0L_M(\Omega),W^1_0E_M(\Omega); W^{-1}L_{\bar M}(\Omega), W^{-1}E_{\bar M}(\Omega))
$$
is a complementary system when the domain $\Omega$ satisfies the {\em segment property}.

See Section 8.1 in \cite{DB} for the definition of the segment property.

Using similar ideas, in \cite[Section 2.3]{FBSP}  it is shown that if $\Omega$ satisfies the segment property, then
\begin{equation}\label{CS}
(W^s_0L_M(\Omega),W^s_0E_M(\Omega); W^{-s}L_{\bar M}(\Omega), W^{-s}E_{\bar M}(\Omega))
\end{equation}
is also a complementary system.

\section{Nonlocal, quasilinear, anisotropic operators} \label{sec:operators}
In this section, we will define the operators that will be the subject of our investigation. These operators are natural extensions of the fractional order $m-$Laplace operator that were introduced in \cite{FBS}. See also \cite{FBSP}.

To this end, let $a\colon \R^n\times\R^n\times \R\to \R$, $a=a(x,y,\xi)$. On this function $a$ we will impose the following conditions:

\begin{conditions}\label{condicionesa}
We will assume from now on that $a\colon \R^n\times\R^n\times \R\to \R$ satisfies
\begin{itemize}
\item{\em (Caratheodory condition)} $a(\cdot, \cdot, \xi)$ is a measurable for all $\xi\in\R$ and $a(x, y, \cdot)$ is a continuous function for a. e. $(x, y)\in \R^{2n}$.
\item $a(x,y,-\xi)=-a(x,y,\xi),$ for all $\xi\in\R$ and a.e. $(x, y)\in\R^{2n}$.
\item $a(x,y,\xi)\xi>0$ for a.e. $(x, y)\in\R^{2n}$ and all $\xi\in\R$ with $\xi\neq 0.$
\item {\em (Growth condition)} There exists a Young function $M$, a function $d\in E_{\bar{M}}(\nu_n)$ and constants $b,c\in \R^+$ such that  for a.e. $(x, y)\in \R^{2n}$ and all $\xi\in \R$
\begin{equation}\label{GC}
|a(x,y,\xi)|\leq d(x,y)+b\bar{M}^{-1}(M(c\xi))
\end{equation} 
where $\bar{M}$ is the complementary function of the Young function $M$.
\item{\em (Monotonicity condition)} For a.e. $(x, y)\in \R^{2n}$ and all $\xi,\xi'\in\R$ 
\begin{equation}\label{MC}
(a(x,y,\xi)-a(x,y,\xi'))(\xi-\xi')\geq 0.
\end{equation}
\end{itemize}
\end{conditions}

Given a function $a\colon \R^n\times\R^n\times \R\to \R$ that satisfies conditions \ref{condicionesa}, we define the operator $\A\colon \Dom(\A)\to W^{-s}L_{\bar{M}}$ as	
$$
\A u(x):=-\mathrm{div}^s(a(x,y,D^su(x,y))),
$$
where $\Dom(\A)=\{u\in W_0^sL_M(\Omega)\colon a(x, y, D^su(x, y))\in L_{\bar{M}}(\nu_n)\}$.

Observe that
$$
\begin{aligned}
\A u(x) &= -\mathrm{div}^s(a(x,y,D^su(x,y)))
\\
&=-\text{p.v.}\int_{\R^n}(a(y,x,D^su(y,x))-a(x,y,D^su(x,y)))\,\frac{dy}{|x-y|^{n+s}}\\
&=2\text{p.v.}\int_{\R^n}a_{\text{sym}}(x,y,D^su(x,y))\,\frac{dy}{|x-y|^{n+s}}
\\
&= 2\lim_{\varepsilon\downarrow 0}\int_{|x-y|\geq \varepsilon}a_{\text{sym}}(x,y,D^su(x,y))\,\frac{dy}{|x-y|^{n+s}},
\end{aligned}
$$
where $a_{\text{sym}}$ is the symmetric part of $a$ given by
$$
a_{\text{sym}}(x,y,\xi)=\frac{a(x, y, \xi)+a(y, x, \xi)}{2}.
$$

Observe that $a_{\text{sym}}$ also verifies conditions \ref{condicionesa} whenever $a$ does. So, without loss of generality, we may assume that $a$ is symmetric, i.e. $a=a_{\text{sym}}$, in this way
\begin{equation}\label{operador}
\A u(x)=2\lim_{\varepsilon\downarrow 0}\int_{|x-y|\geq \varepsilon}a(x,y,D^su(x,y))\,\frac{dy}{|x-y|^{n+s}},	
\end{equation}
where the kernel $a$ satisfies conditions \ref{condicionesa} and is symmetric.

We observe that, by definition, $\Dom(\A)\subset W_0^sL_M(\Omega)$. Let us now see that $W_0^sE_M(\Omega)\subset \Dom(\A)$.
\begin{lema}\label{WSEINDOM}
Let $0<s<1$ and $\Omega\subset \R^n$ be a bounded open set and let $a$ satisfies conditions \ref{condicionesa}. Then, if $u\in W_0^sE_M(\Omega)$ we have that $a(x,y,D^su)\in L_{\bar{M}}(\nu_n)$.
\end{lema}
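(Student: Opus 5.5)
Since $L_{\bar M}(\nu_n)$ is a linear space and $d\in E_{\bar M}(\nu_n)\subset L_{\bar M}(\nu_n)$, the growth condition \eqref{GC} reduces the claim to showing that
$$
\bar M^{-1}\bigl(M(c\,D^su)\bigr)\in L_{\bar M}(\nu_n).
$$
Indeed, $|a(x,y,D^su)|$ is then dominated pointwise by the sum of two elements of $L_{\bar M}(\nu_n)$. The Luxemburg norm is a lattice norm, so domination by an element of the space puts $a(x,y,D^su)$ in the space. Measurability of $(x,y)\mapsto a(x,y,D^su(x,y))$ follows from the Carathéodory condition, because $D^su$ is measurable on $\R^{2n}\setminus\Delta$ and the diagonal is $\nu_n$-null.

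The key point uses that $u\in W^s_0E_M(\Omega)$. By definition $u$ is a norm limit of test functions in $W^sE_M$, so $D^su\in E_M(\nu_n)$. The characterization of $E_M(\nu_n)$ then gives
$$
\iint_{\R^{2n}} M(k\,D^su)\,d\nu_n<\infty\qquad\text{for every } k>0,
$$
and in particular for $k=c$. Now use the identity $\bar M(\bar M^{-1}(t))=t$ for $t\ge0$, which holds because $\bar M$ is a continuous increasing bijection of $[0,\infty)$. It yields
$$
\iint_{\R^{2n}} \bar M\bigl(\bar M^{-1}(M(c\,D^su))\bigr)\,d\nu_n=\iint_{\R^{2n}} M(c\,D^su)\,d\nu_n<\infty.
$$
Hence $\bar M^{-1}(M(cD^su))\in\LL_{\bar M}(\nu_n)\subset L_{\bar M}(\nu_n)$, and multiplying by the constant $b$ keeps it in the space.

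The only delicate point is where the hypothesis $u\in W^s_0E_M$ is really needed. For a general $u\in W^s_0L_M$, one only knows $\iint M(D^su/k)\,d\nu_n<\infty$ for some $k>0$. Without the $\Delta_2$-condition there is no control of $M(cD^su)$ when $c>1/k$, which is exactly why the domain $\Dom(\A)$ is in general strictly smaller than $W^s_0L_M(\Omega)$. With membership in $E_M(\nu_n)$ this obstruction disappears, since finiteness holds for every multiple, and the argument above closes. One should also record that $\bar M^{-1}$ is well defined and finite everywhere, which holds because $\bar M$ is a Young function and so is superlinear, increasing and onto $[0,\infty)$.
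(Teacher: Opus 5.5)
Your proof is correct and follows the paper's argument. You use $D^su\in E_M(\nu_n)$ to get $\iint_{\R^{2n}} M(c\,D^su)\,d\nu_n<\infty$, then the identity $\bar M(\bar M^{-1}(t))=t$ puts $b\,\bar M^{-1}(M(c\,D^su))$ in $L_{\bar M}(\nu_n)$, and you conclude via \eqref{GC}; the extra details you supply (measurability from the Carath\'eodory condition and the lattice property of $L_{\bar M}(\nu_n)$) are exactly what the paper leaves implicit in its final sentence.
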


\begin{proof}
Let $u\in W_0^sE_M(\Omega)$, then 
\begin{equation}\label{LIAEM1}
\iint_{\R^{2n}}M\left (|cD^s u| \right )\,d\nu_n<\infty,
\end{equation}	
for every constant $c\in\R$.

We claim that $b \bar{M}^{-1}(M(c|D^s u|))\in L_{\bar{M}}(\nu_n)$. Indeed by \eqref{LIAEM1}
\begin{align*}
\iint_{\R^{2n}}\bar{M}\left (\frac{b \bar{M}^{-1}(M(c|D^s u|))}{b} \right )\,d\nu_n =\iint_{\R^{2n}} M(c|D^su|)\,d\nu_n<\infty.
\end{align*}
The proof concludes thanks to the growth condition \eqref{GC}.
\end{proof}

Next, from the properties of the fractional divergence $\text{div}^s$, we get the following result
\begin{teo}
Let $0<s<1$ be fixed and let $a$ satisfies conditions \ref{condicionesa}. Then for every $u\in \Dom(\A)$, we have that $\A u\in W^{-s}L_{\bar{M}}(\Omega)$. Moreover, for every $v\in W_0^s L_{M}(\Omega)$,
\[
\langle \A u,v \rangle =\iint_{\R^{2n}}a(x,y,D^su)D^sv\,d\nu_n,
\]
\end{teo}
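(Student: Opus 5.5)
The argument is short: it applies the fractional divergence results quoted in Section \ref{prel} to the vector field $F(x,y):=a(x,y,D^su(x,y))$. Fix $u\in\Dom(\A)$. By the definition of $\Dom(\A)$, $F\in L_{\bar M}(\nu_n)$. Measurability of $F$ on $\R^{2n}\setminus\Delta$ follows from the Carath\'eodory condition in Conditions \ref{condicionesa}, because $(x,y)\mapsto D^su(x,y)$ is measurable. Since $\Delta$ is $\nu_n$-null, this is enough.

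\emph{Membership in $W^{-s}L_{\bar M}(\Omega)$.} By definition $\A u=-\mathrm{div}^sF=\mathrm{div}^s(-F)$, and $-F\in L_{\bar M}(\nu_n)$. So $\A u$ is of the form $f+\mathrm{div}^sG$ with $f=0\in L_{\bar M}$ and $G=-F\in L_{\bar M}(\nu_n)$. To see that it is a distribution on $\Omega$, test against $\varphi\in\mathcal D(\Omega)$. The result of \cite{FBPLS} quoted before \eqref{int.by.parts} gives $\mathrm{div}^sF\in(W^s_0L_M(\Omega))^*$. Also $\mathcal D(\Omega)\subset W^s_0L_M(\Omega)$ with continuous inclusion for the usual topology of $\mathcal D$, since $\|D^s\varphi\|_{M,\nu_n}$ is controlled by $\|\varphi\|_{C^1}$ on a fixed compact support. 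Hence $\A u\in\mathcal D'(\Omega)$, and therefore $\A u\in W^{-s}L_{\bar M}(\Omega)$. The bound $\|\A u\|_{-s,\bar M}\le\|F\|_{\bar M,\nu_n}$ comes out for free.

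\emph{The duality formula.} Apply \eqref{int.by.parts} to $-F$. For every $v\in W^s_0L_M(\Omega)$,
$$
\langle \A u,v\rangle=\langle \mathrm{div}^s(-F),v\rangle=\iint_{\R^{2n}}F\,D^sv\,d\nu_n=\iint_{\R^{2n}}a(x,y,D^su)\,D^sv\,d\nu_n .
$$
The right-hand side is finite by the H\"older-type inequality following \eqref{young}, applied with $D^sv\in L_M(\nu_n)$ and $F\in L_{\bar M}(\nu_n)$:
$$
\iint_{\R^{2n}}|F\,D^sv|\,d\nu_n\le 2\|F\|_{\bar M,\nu_n}\|D^sv\|_{M,\nu_n}.
$$

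\emph{Main obstacle.} The delicate point is that \eqref{int.by.parts} must hold for every $v\in W^s_0L_M(\Omega)$, not only for $v\in\mathcal D(\Omega)$. Without $\Delta_2$, $\mathcal D(\Omega)$ is not norm dense, only $\sigma(L_M\times L_M(\nu_n),E_{\bar M}\times E_{\bar M}(\nu_n))$ dense, so one cannot pass to the limit using the norm alone.

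If I had to justify this directly, I would first verify the formula for $\varphi\in\mathcal D(\Omega)$. Compute $\langle\mathrm{div}^sF,\varphi\rangle$ by truncating to $\{|x-y|\ge\ve\}$, exchange $x$ and $y$ in the integral of the term $F(y,x)$, and let $\ve\to0$ by dominated convergence, which is justified by $F\,D^s\varphi\in L^1(\nu_n)$. Then I would extend to general $v$ by observing that the pairing $v\mapsto\iint F\,D^sv\,d\nu_n$ defines the functional identified with $\mathrm{div}^sF$ in the complementary system \eqref{CS}. Here I would invoke Lemma \ref{lemaGossez}: the pairing between $W^s_0L_M(\Omega)$ and $W^{-s}L_{\bar M}(\Omega)$ is precisely the one induced from $L_M(\nu_n)\times L_{\bar M}(\nu_n)$, so both sides of the identity agree by construction. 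In practice, the result of \cite{FBPLS} as stated covers all $v\in W^s_0L_M(\Omega)$, so I will simply cite it.
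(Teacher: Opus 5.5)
Your proposal is correct and follows the same route as the paper. You set $F=a(x,y,D^su)$, which lies in $L_{\bar M}(\nu_n)$ by the definition of $\Dom(\A)$, and apply the fractional integration by parts formula \eqref{int.by.parts} from \cite{FBPLS} to $-\mathrm{div}^sF$; your extra checks (measurability of $F$, that $\A u$ is a distribution on $\Omega$, the H\"older bound, and the remark that the identity for general $v\in W^s_0L_M(\Omega)$ rests on the pairing induced via Lemma \ref{lemaGossez}) only make explicit details the paper leaves implicit.
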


\begin{proof}
Observe that if we denote $F(x,y) = a(x, y, D^s u(x, y))$, if $u\in \Dom(\A)$ then $F\in L_{\bar M}(\nu_n)$. Therefore, the integration by parts formula \eqref{int.by.parts}, gives us
$$
\langle \A u,v \rangle = \langle-\text{div}^s F, v\rangle = \iint_{\R^{2n}} FD^sv\, d\nu_n = \iint_{\R^{2n}}a(x,y,D^su)D^sv\,d\nu_n,
$$
and the proof is complete.
\end{proof}

A very important property of the operator $\A$ is the pseudomonotonicity. The following result is the main theorem of this section.
\begin{teo}\label{pseudo}
Let $\Omega\subset \R^n$ be a bounded and open domain that satisfies the segment property. Then, the operator $\A$ is pseudomonotone. That is, If $\{u_i\}_{i\in \N}\subset \Dom(\A)$ is a sequence such that it fulfills
\begin{equation}\label{seq.pseudo}
\begin{cases}	
&u_i\to u\text{ for }\sigma(W_0^sL_M(\Omega),W^{-s}E_{\bar{M}}(\Omega))\\
&\A u_i\to f\in W^{-s}L_{\bar{M}}(\Omega)\text{ for }\sigma(W^{-s}L_{\bar{M}}(\Omega),W_0^sE_M(\Omega))\\
&\limsup_{i\to \infty}\langle \A u_i,u_i\rangle\leq \langle f,u\rangle
\end{cases}
\end{equation}
then 
\begin{equation}\label{tesis.pseudo}
\begin{cases}
&u\in \Dom(\A)
\\
&\A u=f
\\
&\langle \A u_i,u_i\rangle\to  \langle f,u\rangle\text{ if }i\to \infty.
\end{cases}
\end{equation}
\end{teo}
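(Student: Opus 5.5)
We will follow Gossez's scheme for pseudomonotone operators on complementary systems, adapted to the nonlocal setting as in \cite{FBSP}. The first step is to obtain bounds. Since $u_i\to u$ weakly in $\sigma(W_0^sL_M(\Omega),W^{-s}E_{\bar M}(\Omega))$, the sequence is bounded in $W^s_0L_M(\Omega)$. Then $D^su_i$ is bounded in $L_M(\nu_n)$, and (up to a subsequence) $D^su_i\to D^su$ $\nu_n$-a.e. This a.e. convergence will come from the compact embedding of $W^s_0L_M(\Omega)$ into $L_M(\Omega)$ \cite{FBS}, which gives $u_i\to u$ in $L^1(\Omega)$ and a.e. in $\R^n$. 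Next we show that $a_i:=a(x,y,D^su_i)$ is bounded in $L_{\bar M}(\nu_n)$. The growth condition \eqref{GC} alone does not give this without $\Delta_2$, so we use the monotonicity trick. For $w$ with $\|D^sw\|_{M,\nu_n}$ small (say $c\,|D^sw|\le$ something with $\iint M(c D^sw)\,d\nu_n\le 1$), \eqref{MC} gives
\[
a_i\,D^sw \le a_i\,D^su_i - a(x,y,D^sw)(D^su_i-D^sw).
\]
Taking $D^sw$ to range over a ball of $E_M(\nu_n)$ with radius fixed by $c$ and $b$ makes $a(x,y,D^sw)$ bounded in $L_{\bar M}(\nu_n)$ by the proof of Lemma \ref{WSEINDOM}. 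Since $\limsup\langle \A u_i,u_i\rangle<\infty$, we get $\sup_{D^sw}\iint a_iD^sw\,d\nu_n\le C$. By duality ($L_{\bar M}(\nu_n)=(E_M(\nu_n))^*$), $a_i$ is then bounded in $L_{\bar M}(\nu_n)$. Extracting a weak$^*$ subsequence, $a_i\to h$ in $\sigma(L_{\bar M}(\nu_n),E_M(\nu_n))$, and $f=-\mathrm{div}^s h$ by testing against $W_0^sE_M(\Omega)$.

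The main step is to identify $h=a(x,y,D^su)$ and to pass to the limit in the energy. We use the a.e. convergence of $D^su_i$ together with the Carathéodory condition, so $a_i\to a(x,y,D^su)$ $\nu_n$-a.e. Combined with the $L_{\bar M}$ bound, Vitali on sets of finite $\nu_n$ measure away from the diagonal gives $h=a(x,y,D^su)$. This shows $u\in\Dom(\A)$ and $\A u=f$.

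Alternatively, and as the backup that does not require the compactness argument, we use the Minty trick. For $v\in W_0^sE_M(\Omega)$, monotonicity gives $\langle \A u_i-\A v,u_i-v\rangle\ge0$. Passing to the limit is delicate because $\langle \A u_i,u_i\rangle$ is handled by the $\limsup$ hypothesis, but $\langle \A v,u_i\rangle\to\langle \A v,u\rangle$ only if $\A v\in W^{-s}E_{\bar M}$, which fails without $\Delta_2$ on $\bar M$. We also need $\langle \A u_i,v\rangle\to\langle f,v\rangle$, which holds since $v\in W_0^sE_M$. So for the Minty route we would approximate $u$ by elements of $W^s_0E_M(\Omega)$ using the complementary system \eqref{CS} and the segment property (this is exactly where the hypothesis on $\Omega$ enters), via Gossez-type modular convergence. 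We then take $v=u_k-t w$, let $k\to\infty$ and $t\downarrow0$, and conclude $h=a(\cdot,D^su)$ by hemicontinuity. I expect this approximation and limiting step to be the main obstacle, and will rely primarily on the a.e./Vitali identification above, using Minty only to justify the energy limit.

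Finally, we prove convergence of the energies. Monotonicity gives $\iint (a_i-a(D^su))(D^su_i-D^su)\,d\nu_n\ge0$. Expanding, $\liminf\langle\A u_i,u_i\rangle\ge \lim\iint a_iD^su + \lim\iint a(D^su)(D^su_i-D^su)$. Both limits are justified by Fatou: the integrands $a_iD^su_i\ge0$ converge a.e. to $a(D^su)D^su$, so $\liminf\iint a_iD^su_i\ge\iint a(D^su)D^su=\langle f,u\rangle$. Together with the $\limsup$ hypothesis, this yields $\langle\A u_i,u_i\rangle\to\langle f,u\rangle$. The statement then holds for the full sequence, since every subsequence has a further subsequence with the same limit.
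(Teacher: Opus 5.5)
Your proof is correct, but it identifies the limit by a genuinely different mechanism from the paper's. The paper uses no compactness in this proof. It first bounds $a(x,y,D^su_i)$ in $L_{\bar M}(\nu_n)$, by monotonicity against bounded compactly supported $V$ plus the uniform boundedness principle, and extracts a weak$^*$ limit $\Phi$. It then passes to the limit in the monotonicity inequality, \emph{using the $\limsup$ hypothesis}, to get $\iint(\Phi-a(x,y,V))(D^su-V)\,d\nu_n\ge 0$. Finally it identifies $\Phi$ with $a(x,y,D^su)$ through the Minty-type Lemmas \ref{paralimite} and \ref{paraEulerLagrange}, which are built on the truncation sets $R_j(u)$. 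The membership $a(x,y,D^su)\in L_{\bar M}(\nu_n)$ and the energy convergence then come from two further monotonicity arguments; the latter tests against $D^su\,\chi_{R_j(u)}$ and lets $j\to\infty$.

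You instead exploit the nonlocal structure. Since $D^su$ is a pointwise function of $u$, the a.e. convergence of $u_i$ given by the compact embedding (which the paper itself invokes in Theorem \ref{teolambdaiui}) passes to $D^su_i$. Carath\'eodory plus Vitali then identifies $h=a(x,y,D^su)$ as functions, so $u\in\Dom(\A)$ is immediate. Fatou applied to $a(x,y,D^su_i)D^su_i\ge 0$ yields the energy limit.

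Your route is shorter and bypasses the truncation/hemicontinuity machinery. It also shows that the $\limsup$ hypothesis is needed only for the $L_{\bar M}$ bound and for the upper half of the energy convergence. The paper's route is a faithful transcription of Gossez's local argument, where gradients need not converge a.e. It relies on monotonicity and duality alone, so it does not depend on any compact embedding.

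A few cleanups are needed.
\begin{itemize}
\item In the bound, the pointwise inequality from \eqref{MC} holds for every $W\in E_M(\nu_n)$. You must let $W$ range over a full ball of $E_M(\nu_n)$, not only over quotients $D^sw$; otherwise you only control a quotient norm.
\item In the energy step, the displayed expansion is not justified. You cannot test $\sigma(L_{\bar M}(\nu_n),E_M(\nu_n))$ convergence against $D^su\in L_M(\nu_n)$, nor $\sigma(L_M(\nu_n),E_{\bar M}(\nu_n))$ convergence against $a(x,y,D^su)\in L_{\bar M}(\nu_n)$, and Fatou does not apply to these sign-changing terms. The expansion is also unnecessary: keep only the Fatou argument. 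Likewise, the Minty ``backup'' can be dropped.
\item State explicitly that the strong limit given by compactness coincides with $u$. Test with $L^\infty(\Omega)\subset E_{\bar M}$, viewed inside $W^{-s}E_{\bar M}(\Omega)$.
\item Cite a compactness result that does not assume $\Delta_2$, such as \cite[Theorem B.3]{FBSP}. Compactness into $L^1(\Omega)$ is all you actually use.
\end{itemize}
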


The proof of Theorem \ref{pseudo} is divided into a series of lemmas for the reader's convenience. It will be helpful to introduce the following notation: for a given function $u$ we define the sets
$$
R_j(u):=\{(x,y)\in \R^{2n}\colon |(x,y)|\leq j \text{ and }|D^s u|\leq j \}.
$$

\begin{lema}\label{paralimite}
Let $u \in W_0^s L_M(\Omega) \text{ and } v \in W_0^s E_M(\Omega) $ respectevely. If $0<|\lambda|<1$ then, for each $j \in \N$
$$
a(x,y,D^su+\lambda D^s v)D^s v,\quad a(x,y,D^s u)D^s v\in L^1(R_j(u),d\nu_n).
$$
Moreover
$$
\lim_{\lambda \to 0}\iint_{R_j(u)} a(x,y,D^su+\lambda D^s v)D^sv \,d\nu_n=\iint_{R_j(u)} a(x,y,D^s u)D^s v\,d\nu_n. 
$$
\end{lema}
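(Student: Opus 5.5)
Work on $R_j(u)$, where both $|(x,y)|\le j$ and $|D^su|\le j$, and apply dominated convergence in $\lambda$. The second integrability claim is the case $\lambda=0$ of the first, so it suffices to bound $a(x,y,D^su+\lambda D^sv)D^sv$ by a $\lambda$-independent integrable function on $R_j(u)$ when $|\lambda|<1$.

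\textbf{Step 1: a pointwise bound.} By the growth condition \eqref{GC}, and since $\bar M^{-1}\circ M$ is even and nondecreasing on $[0,\infty)$,
\[
|a(x,y,D^su+\lambda D^sv)|\le d(x,y)+b\bar M^{-1}\bigl(M(c(j+|D^sv|))\bigr)
\]
on $R_j(u)$ for all $|\lambda|<1$. By convexity, $M(c(j+|D^sv|))\le \tfrac12 M(2cj)+\tfrac12 M(2c|D^sv|)$.

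\textbf{Step 2: integrability of the dominating function.} Set $h:=b\bar M^{-1}(M(c(j+|D^sv|)))$.
\begin{itemize}
\item $d\in E_{\bar M}(\nu_n)\subset L_{\bar M}(\nu_n)$.
\item Because $v\in W_0^sE_M(\Omega)$, we have $\iint M(k|D^sv|)\,d\nu_n<\infty$ for every $k$. Restricted to $R_j(u)$ the constant term also contributes finitely, provided $\nu_n(R_j(u))<\infty$. Then, exactly as in Lemma \ref{WSEINDOM}, $\iint_{R_j(u)}\bar M(h/b)\,d\nu_n<\infty$, so $h\in L_{\bar M}(R_j(u),d\nu_n)$.
\item By the Hölder-type inequality $\int|uv|\le 2\|u\|_M\|v\|_{\bar M}$, multiplying by $|D^sv|\in L_M(\nu_n)$ gives $(d+h)|D^sv|\in L^1(R_j(u),d\nu_n)$.
\end{itemize}

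\textbf{Step 3: passing to the limit.} By the Carathéodory condition, $a(x,y,D^su+\lambda D^sv)\to a(x,y,D^su)$ a.e. as $\lambda\to0$. Dominated convergence then gives the limit.

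\textbf{Main obstacle: finiteness of $\nu_n(R_j(u))$.} The measure $\nu_n$ has a non-integrable singularity $|x-y|^{-n}$ at the diagonal, so the ball constraint alone does not make $\nu_n(R_j(u))$ finite.

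The fix I would try first is to avoid needing $\nu_n(R_j(u))<\infty$ altogether, by not splitting off the constant $M(2cj)$:
\begin{itemize}
\item On $\{|D^sv|\ge j\}$ we have $h\le b\bar M^{-1}(M(2c|D^sv|))$, which is integrable as in Lemma \ref{WSEINDOM}.
\item On $\{|D^sv|<j\}$ the factor $|D^sv|$ itself supplies smallness. Using $\bar M^{-1}(M(t))\le C_t$ there, it suffices that $|D^sv|\in L^1(R_j(u),d\nu_n)$. Near the diagonal, $|D^sv|\le |D^sv|^{\,}$ is controlled via $M(|D^sv|)\ge$ a multiple of $|D^sv|$ on bounded ranges? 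That fails, since $M(t)/t\to0$ as $t\to0$.
\end{itemize}
So if this route breaks down, I would instead argue through the structure of $R_j(u)$: since $u$ vanishes outside the bounded set $\Omega$ and $D^su$ is bounded on $R_j(u)$, I would restrict to regions away from the diagonal or interpret the lemma in the paper's setting where $\nu_n(R_j(u))<\infty$ is taken for granted. I would check the paper's definition conventions at this point.
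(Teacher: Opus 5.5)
Your Steps 1--3 follow the paper's proof almost line by line. The paper bounds $|a(x,y,D^su+\lambda D^sv)D^sv|$ by Young's inequality, where you use H\"older, getting $M(2b|D^sv|)+\bar M\bigl(a(x,y,D^su+\lambda D^sv)/(2b)\bigr)$. It then uses the growth condition \eqref{GC} and convexity to reach the $\lambda$-independent majorant $M(2b|D^sv|)+\tfrac12\bar M(d/b)+\tfrac14M(2cj)+\tfrac14M(2c|D^sv|)$. It declares this integrable on $R_j(u)$ because $d\in E_{\bar M}(\nu_n)$ and $v\in W^s_0E_M(\Omega)$, and concludes by dominated convergence.

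The obstacle you flag is real, and the paper passes over it. Integrability of the constant term $\tfrac14M(2cj)$ requires $\nu_n(R_j(u))<\infty$, and this is false. Since $u=0$ a.e.\ in $\R^n\setminus\Omega$, we have $D^su=0$ a.e.\ on $(\R^n\setminus\Omega)\times(\R^n\setminus\Omega)$. So for $j$ large, $R_j(u)$ contains (up to a null set) $B\times B$ for some ball $B\subset\R^n\setminus\Omega$, and $\nu_n(B\times B)=\int_B\int_B|x-y|^{-n}\,dy\,dx=\infty$. Finiteness of $\nu_n(R_j(u))$ is therefore not something you may take for granted. As it stands, your proposal --- like the argument in the paper --- does not prove the lemma.

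Concretely, what is missing is control of $|a(x,y,D^su+\lambda D^sv)|\,|D^sv|$ on the part of $R_j(u)$ near the diagonal over $\overline\Omega$. Elsewhere there is no difficulty: the integrand vanishes on $(\R^n\setminus\Omega)^2$, where $D^sv=0$, and $R_j(u)\cap\{|x-y|\ge\delta\}$ has finite $\nu_n$-measure. Your attempt through $|D^sv|\in L^1(R_j(u),d\nu_n)$ fails for the reason you give. The other natural route also fails under the stated hypotheses. Estimating $M(c|D^su+\lambda D^sv|)\le\tfrac12M(2c|D^su|)+\tfrac12M(2c|D^sv|)$ without inserting $j$ requires $M(2c|D^su|)\chi_{R_j(u)}\in L^1(d\nu_n)$. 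But $u\in W^s_0L_M(\Omega)$ only gives $M(k|D^su|)\in L^1(d\nu_n)$ for some possibly small $k>0$. A bound $M(2ct)\le C_jM(kt)$ on $(0,j]$ for arbitrarily small $k$ amounts to a $\Delta_2$-condition for $M$ near $0$, which is not assumed.

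To close the gap you need either such an extra hypothesis, or a change in the definition of $R_j(u)$. The second option is your ``away from the diagonal'' idea, but built into the definition rather than into the proof: set $R_j(u):=\{(x,y)\colon|(x,y)|\le j,\ |x-y|\ge1/j,\ |D^su|\le j\}$. This set has $\nu_n$-measure at most $j^n$ times the Lebesgue measure of the ball of radius $j$ in $\R^{2n}$. Your Steps 1--3 then go through verbatim. The properties of $R_j(u)$ that the later truncation arguments use are also preserved: the truncations $D^su\,\chi_{R_j(u)}$ are bounded with compact support, the sets are nested in $j$, and $R_j(u)$ increases to $\R^{2n}\setminus\Delta$ up to a null set.
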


\begin{proof}
Let $u\in W_0^sL_M(\Omega)$  and $v\in W_0^s E_M(\Omega)$ then using Young's inequality we have
\begin{align*}
|a(x,y,D^su+\lambda D^s v)D^s v| &= \left |\frac{a(x,y,D^su+\lambda D^s v)}{2 b}\right | \left |2  b D^s v\right | 
\\
&\le M(2 b D^s v) + \bar M\left (\frac{a(x,y,D^su+\lambda D^s v)}{2 b}\right ),	
\end{align*}
where $b\in\R$ is given by the growth condition \eqref{GC}.

Since $v\in W^s_0 E_M(\Omega)$ it follows that $M(2 b D^s v)\in L^1(\R^{2n}, d\nu_n)$ and by growth condition we have in $R_j(u)$
\begin{align*}
\bar M\left (\frac{a(x,y,D^su+\lambda D^s v)}{2 b}\right ) &\le \bar M\left (\frac{ d(x,y)}{2 b} +\frac{\bar M^{-1}(M(c (D^su+\lambda D^s v))}{2}\right )\\
&\le \frac12\left(\bar M\left (\frac{d(x, y)}{b}\right)+M(c(|D^s u|+|D^s v|)\right)
\\
&\le \frac{1}{2}\left ( \bar{M}\left (\frac{d(x, y)}{b}\right )+M(cj+c|D^s v|)\right )
\\
&\le \frac12 \left ( \bar{M}\left (\frac{d(x, y)}{b}\right )+\frac12 M(2cj)+\frac12 M(2c|D^s v|) \right ),
\end{align*}
using the fact that $d\in E_{\bar{M}}(\nu_n)$ and $v\in W_0^s E_M(\Omega)$ the right hand side in the last inequality is integrable in $R_j(u)$. We observe that the last inequality is valid uniformly in $\lambda$.

Finally, using the fact 
$$
a(x,y,D^su+\lambda D^sv)D^sv\to a(x,y,D^su)D^sv,
$$
as $\lambda\to 0$ a.e. in $R_j(u)$ and the dominated convergence theorem we conclude the proof.
\end{proof}
\begin{lema}\label{paraEulerLagrange}
If there exists $u \in W_0^sL_M(\Omega)$ and $\Phi \in L_{\bar{M}}(\nu_n)$ such that
\begin{equation}\label{desi}
\iint_{\R^{2n}} (a(x,y,W)-\Phi)(W-D^s u)\,d\nu_n\geq 0,	
\end{equation}
for all $W\in L^{\infty}(\R^{2n},d\nu_n)$ with compact support then $a(x,y,D^su)=\Phi \in (W_0^sL_M(\Omega))^*$ that is
$$
\iint_{\R^{2n}}a(x,y,D^su)D^sv\,d\nu_n =\iint_{\R^{2n}}\Phi D^sv\,d\nu_n \qquad \forall v \in W_0^sL_M(\Omega).
$$
\end{lema}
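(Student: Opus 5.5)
This is a Minty-type argument: test \eqref{desi} with a truncated perturbation of $D^su$, let the perturbation parameter go to zero using Lemma \ref{paralimite}, and then remove the truncation.

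\textbf{Step 1 (choice of test field).} Fix $v\in W_0^sE_M(\Omega)$, $j\in\N$, $k\in\N$ and $\lambda\in(0,1)$. Let $\chi_j$ be the characteristic function of $R_j(u)$. Truncate $D^sv$ by setting
$$
T_k(D^sv)=\max\{-k,\min\{k,D^sv\}\}\,\chi_{\{|(x,y)|\le k\}} .
$$
Take
$$
W=\chi_j\bigl(D^su+\lambda T_k(D^sv)\bigr).
$$
This $W$ is bounded and has compact support, so it is admissible in \eqref{desi}.

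\textbf{Step 2 (splitting the inequality).} Off $R_j(u)$ we have $W=0$, and oddness gives $a(x,y,0)=0$. Substituting $W$ into \eqref{desi} therefore gives
$$
\lambda\iint_{R_j(u)}(a(x,y,W)-\Phi)\,T_k(D^sv)\,d\nu_n+\iint_{\R^{2n}\setminus R_j(u)}\Phi\, D^su\,d\nu_n\ \ge 0 .
$$
The integrand $\Phi\,D^su$ is in $L^1(\nu_n)$ by Hölder's inequality in Orlicz spaces. Hence the second term tends to $0$ as $j\to\infty$. The order of limits matters here, so we do not divide by $\lambda$ and let $\lambda\to0$ first. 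Instead we work in a fixed order, as follows.

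\textbf{Step 3 (Minty argument on a fixed set).} On $R_j(u)$, inequality \eqref{desi} is first applied with $W=\chi_j D^su+\lambda\chi_j T_k(D^sv)$. It is then applied with $W=\chi_j D^su$, plus a separate localized argument: test with $W=\chi_j D^su+\lambda\psi$ for arbitrary bounded $\psi$ supported in $R_j(u)$. The contribution outside $R_j(u)$ is then exactly $\iint_{R_j^c}\Phi D^su$, which does not depend on $\lambda$.

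Compare this with $\psi=0$. Since $(a(x,y,D^su)-\Phi)D^su$ is integrable on $R_j(u)$, and the inequality for $\psi=0$ is the base case, we obtain
$$
\lambda\iint_{R_j}(a(x,y,D^su+\lambda\psi)-\Phi)\psi\,d\nu_n \ \ge\ -\iint_{\R^{2n}}(a(x,y,\chi_jD^su)-\Phi)(\chi_j D^su-D^su)\,d\nu_n.
$$
The right-hand side need not vanish, so in practice I would simply divide by $\lambda$ and let $\lambda\to0^+$. A dominated-convergence argument as in Lemma \ref{paralimite} (with $\psi$ bounded on a set where $|D^su|\le j$) gives
$$
\iint_{R_j}(a(x,y,D^su)-\Phi)\psi\,d\nu_n\ge -C_j/\lambda .
$$
This is useless, which is why I expect this to be the main obstacle. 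The fix I would try is to replace \eqref{desi} with the standard trick: apply it to $W=\chi_{R_j}(D^su)+\chi_{R_i}\lambda\psi$ with $i\le j$ and $\psi$ supported in $R_i$. Then let $j\to\infty$ first. The error $\iint_{R_j^c}\Phi D^su\to0$, and by monotonicity the remaining term tends to the limiting inequality
$$
\lambda\iint_{R_i}(a(x,y,D^su+\lambda\psi)-\Phi)\psi\,d\nu_n\ge0 .
$$
Only after that do we divide by $\lambda$ and let $\lambda\to0^\pm$. This yields
$$
\iint_{R_i}(a(x,y,D^su)-\Phi)\psi\,d\nu_n=0
$$
for every bounded $\psi$ supported in $R_i$, hence $a(x,y,D^su)=\Phi$ a.e. on $R_i(u)$. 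Since $\bigcup_i R_i(u)=\R^{2n}$ up to a null set, we get $a(x,y,D^su)=\Phi$ a.e.

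\textbf{Step 4 (conclusion).} Since $a(x,y,D^su)=\Phi\in L_{\bar M}(\nu_n)$, the function $u$ lies in $\Dom(\A)$. For any $v\in W_0^sL_M(\Omega)$, the integrand $\Phi D^sv$ is in $L^1$, so the desired identity is immediate from the a.e. equality.
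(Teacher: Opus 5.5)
Your final argument (the construction at the end of Step 3 plus Step 4) is correct. It differs from the paper's in one essential place: the direction of the perturbation.

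The paper uses the same two-level truncation you arrive at, $W=D^{s,j}w-D^{s,j}u+D^{s,l}u$ with $l\ge j$. It applies it only with $w=u\pm\lambda v$, $v\in\mathcal D(\Omega)$. After $l\to\infty$ and Lemma \ref{paralimite} it gets $\iint_{R_j(u)}(a(x,y,D^su)-\Phi)D^sv\,d\nu_n=0$ and lets $j\to\infty$. It then extends to $v\in W_0^sL_M(\Omega)$ by density: norm density of $\mathcal D(\Omega)$ in $W_0^sE_M(\Omega)$, and $\sigma(L_M\times L_M(\nu_n),L_{\bar M}\times L_{\bar M}(\nu_n))$-density of $W_0^sE_M(\Omega)$ in $W_0^sL_M(\Omega)$.

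You instead use that \eqref{desi} is assumed for every bounded compactly supported field on $\R^{2n}$, not only for truncated H\"older quotients. So you may perturb in an arbitrary bounded direction $\psi$ and obtain the pointwise identity $a(x,y,D^su)=\Phi$ $\nu_n$-a.e. This is a stronger conclusion reached more cheaply:
\begin{itemize}
\item it gives $u\in\Dom(\A)$ immediately, whereas the paper proves this separately inside Theorem \ref{pseudo};
\item it needs no density argument;
\item it avoids justifying the limit $j\to\infty$ in $\iint_{R_j(u)}a(x,y,D^su)D^sv\,d\nu_n$ before $a(x,y,D^su)\in L_{\bar M}(\nu_n)$ is known.
\end{itemize}
The paper's version yields only the weak identity, which is all it needs for $\A u=f$.

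Two points on the write-up. First, Steps 1, 2 and the first half of Step 3 are abandoned attempts and should be removed. The proof is just $W=\chi_{R_j(u)}D^su+\lambda\chi_{R_i(u)}\psi$ with $i\le j$, followed by:
\begin{itemize}
\item $j\to\infty$: the $\psi$-term does not depend on $j$ at all, and $\iint_{R_j(u)^c}\Phi D^su\,d\nu_n\to0$ by dominated convergence, so no monotonicity is involved;
\item division by $\lambda$ and $\lambda\to0^\pm$.
\end{itemize}

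Second, $\nu_n$ is not locally finite near the diagonal. So $R_i(u)$ typically has infinite $\nu_n$-measure, and a bounded $\psi$ supported there need not make $\Phi\psi$ or $a(x,y,D^su+\lambda\psi)\psi$ integrable. Restrict to $\psi$ supported in $R_i(u)\cap\{|x-y|\ge 1/m\}$, which has finite $\nu_n$-measure. There $\|\psi\|_\infty\bigl(d+b\bar M^{-1}(M(c(i+\|\psi\|_\infty)))+|\Phi|\bigr)$ is an integrable majorant for $|\lambda|<1$. Then take $\psi=\mathrm{sign}(a(x,y,D^su)-\Phi)\chi_{R_i(u)\cap\{|x-y|\ge1/m\}}$ to get the a.e. equality. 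The paper's Lemma \ref{paralimite} glosses over the same point when it integrates the constant $M(2cj)$ over $R_j(u)$.
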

\begin{proof}
Let $w \in W_0^sL_M(\Omega)$ be such that $D^s w \in L^\infty(R_l(u),d\nu_n)$. Taking $l\geq j$ we define
$$
W:=D^s w\chi_{R_j(u)}-D^s u\chi_{R_j(u)}+D^s u\chi_{R_{l}(u)}=D^{s,j}w-D^{s,j}u+D^{s,l}u,
$$
then using $W$ in \eqref{desi}
\begin{equation}\label{desiconW}
\iint_{\mathbb{R}^{2n}}(a(x,y,D^{s,j}w-D^{s,j}u+D^{s,l}u)-\Phi)(D^{s,j}w-D^{s,j}u+D^{s,l}u-D^su)\,d\nu_n \geq 0.
\end{equation}
The left hand side on \eqref{desiconW} can be expressed as
\begin{align*}
&\iint_{\R^{2n}}(a(x,y,D^{s,j} w - D^{s,j} u + D^{s,l} u) - \Phi)(D^{s,j} w - D^{s,j} u)\,d\nu_n 
\\
&+\iint_{\R^{2n}}a(x,y,D^{s,j} w - D^{s,j} u + D^{s,l} u)(D^{s,l} u - D^s u)\,d\nu_n 
\\
&-\iint_{\R^{2n}}\Phi(D^{s,l} u-D^s u)\, d\nu_n
\\
&=I+II+III.	
\end{align*}
The first integral $I$ is zero outside $R_j(u)$, the second integral $II$  is zero, and $III$ integral tends to zero if $l\to \infty$. Hence, letting $l \to \infty$, we obtain
$$
\iint_{R_j(u)} \left(a(x,y,D^s w) - \Phi \right)(D^s w - D^s u)\, d\nu_n \geq 0.
$$
Now let $v \in \mathcal{D}(\Omega)$, using Lemma \ref{paralimite} with $\lambda > 0$, first with $w = u + \lambda v$ and then $w = u - \lambda v$, we have
$$
\iint_{R_j(u)} \left(a(x,y,D^s u) - \Phi \right) D^s v \, d\nu_n = 0 \quad \forall v \in \mathcal{D}(\Omega).
$$
Taking the limit $j \to \infty$, we have
$$
\iint_{\mathbb{R}^{2n}} \left(a(x,y,D^s u) - \Phi \right) D^s v \, d\nu_n = 0 \quad \forall v \in \mathcal{D}(\Omega),
$$
and by the density, $\forall v \in W_0^s E_M(\Omega)$. Using the density of $W_0^s E_M(\Omega) \subset W_0^s L_M(\Omega)$ with respect to the $\sigma(L_M \times L_M(\nu_n), L_{\bar{M}} \times L_{\bar{M}}(\nu_n))$ topology, we conclude the proof.
\end{proof}

Now we are in position to prove the pseudomonotonicity.
\begin{proof}[Proof of Theorem \ref{pseudo}]
Let $\{u_i\}_{i\in \N} \subset \Dom(\A)$ and $f\in  W^{-s}L_{\bar{M}}(\Omega)$ such that \eqref{seq.pseudo} is satisfied. 
We must prove that $u=\lim u_i$ and $f$ satisfy \eqref{tesis.pseudo}.

Let us first prove that the sequence $\{a(x,y,D^s u_i)\}_{i \in \N}$ remains bounded in $L_{\bar{M}}(\nu_n)$, indeed let $V \in L^{\infty}(\R^{2n},d\nu_n)$ with compact support, by the monotonicity condition \eqref{MC} we have 
\begin{equation}\label{mono}
\iint_{\R^{2n}} (a(x,y,D^s u_i)-a(x,y,V))(D^s u_i-V)\, d\nu_n\geq 0,
\end{equation}
which implies
\begin{align*}
\iint_{\R^{2n}} a(x,y,D^s u_i)V\,d\nu_n&\leq \iint_{\R^{2n}} a(x,y,D^s u_i)D^s u_i\,d\nu_n
\\
&-\iint_{\R^{2n}} a(x,y,V)D^s u_i\,d\nu_n
\\
&+\iint_{\R^{2n}} a(x,y,V)V\,d\nu_n
\\
&=I+II+III,
\end{align*}
the first integral remains bounded by \eqref{seq.pseudo}, the convergence $u_i\to u$ implies that the second integral is uniformly bounded in $i$ and the last integral is independent of $i$, therefore there exists a constant $C$, independent of $i$, such that
$$
\iint_{\R^{2n}} a(x,y,D^s u_i)V\,d\nu_n\leq C,
$$
for all $V\in L^{\infty}(\R^{2n},d\nu_n)$ with compact support and therefore for all $V\in E_M(\nu_n)$, then by the uniform boundedness principle we have $\|a(x,y,D^s u_i)\|_{L_{\bar{M}}(\nu_n)}\leq C$ uniformly in $i$ and so there exists a subsequence that we still denote by $\{a(x,y,D^s u_i)\}_{i\in \N}$ and   $\Phi\in L_{\bar{M}}(\nu_n)$ such that $a(x,y,D^su_i)\to \Phi$ in $\sigma(L_{\bar{M}}(\nu_n),E_M(\nu_n))$, using the above convergence and the fact $\A u_i \to f \in W^{-s}L_{\bar{M}}(\Omega)$ for $\sigma(W^{-s}L_{\bar{M}}(\Omega),W_0^sE_M(\Omega))$ we have
\begin{align*}
\langle f,v \rangle
&=\lim_{i\to\infty} \langle \A u_i,v\rangle
\\
&=\lim_{i\to\infty}\iint_{\R^{2n}}a(x,y,D^s u_i)D^sv\,d\nu_n
\\
&=\iint_{\R^{2n}}\Phi D^s v\,d\nu_n,
\end{align*}
for all $v\in W_0^sE_M(\Omega)$. This formula together with the density of $W_0^s
E_M(\Omega)$ in $W_0^s
L_M(\Omega)$ with respect to the $\sigma(L_M\times L_M(\nu_n),L_{\bar{M}}\times L_{\bar{M}}(\nu_n)$) topology allows us to extend $v$ to the space $W_0^sL_M(\Omega)$, that is
\begin{equation}\label{deff}
\langle f,v \rangle=\iint_{\R^{2n}}\Phi D^s v\,d\nu_n\qquad \forall v\in W_0^s L_M(\Omega).	
\end{equation}
Now taking limit in \eqref{mono} we obtain
$$
\iint_{\R^{2n}} (\Phi-a(x,y,V))(D^s u-V)\, d\nu_n\geq 0,
$$
for all $V\in L^{\infty}(\R^{2n},d\nu_n)$ with compact support and so by Lemma \ref{paraEulerLagrange} we have $a(x,y,D^s u)=\Phi$ in $(W_0^sL_M(\Omega))^*$, that is $\A u=f$.

On the other hand putting $u_i=u$ in \eqref{mono}, using $\A u=f$ and \eqref{deff} we have
\begin{align*}
\iint_{\R^{2n}} a(x,y,D^s u)V\,d\nu_n&\leq \iint_{\R^{2n}} \Phi D^s u\,d\nu_n
\\
&-\iint_{\R^{2n}} a(x,y,V)D^s u\,d\nu_n
\\
&+\iint_{\R^{2n}} a(x,y,V)V\,d\nu_n,
\\
&<\infty,
\end{align*}
for all $V\in E_M(\nu_n)$ that is $a(x,y,D^su)\in L_{\bar{M}}(\nu_n)$ and so $u\in \Dom(\A)$.

Finally, we prove that 
$$
\langle \A u_i, u_i \rangle \to \langle f, u \rangle \quad \text{for } i \to \infty.
$$
Let  
$$
L = \liminf_{i \to \infty} \iint_{\mathbb{R}^{2n}} a(x,y,D^s u_i) D^s u_i \, d\nu_n,
$$
we only need to prove that \( L \geq \langle f, u \rangle \). 

Using the monotonicity property again we have  
$$
\iint_{\mathbb{R}^{2n}} \left(a(x,y,D^s u_i) - a(x,y,D^{s,j}u) \right) (D^s u_i - D^{s,j}u) \, d\nu_n \geq 0,
$$
where $ D^{s,j}u = D^s u \chi_{R_j(u)}$. Taking the limit in \( i \) and rewriting, we obtain  
$$
L \geq \iint_{\mathbb{R}^{2n}} a(x,y,D^{s,j}u)(D^s u - D^{s,j}u) \, d\nu_n + \iint_{\mathbb{R}^{2n}} a(x,y,D^s u) D^{s,j}u \, d\nu_n.
$$
In $ R_j(u)$ the factor $D^s u - D^{s,j}u = 0$, and in $R_j(u)^c$ the factor $a(x,y,D^{s,j}u)=a(x,y,0)=0$, so the first integral in the above inequality is zero. Therefore  
$$
L \geq \iint_{R_j(u)}a(x,y,D^s u) D^s u \, d\nu_n,
$$
for arbitrary $j$, then  
$$
L \geq \iint_{\mathbb{R}^{2n}}a(x,y,D^s u) D^s u \, d\nu_n = \langle f, u \rangle.
$$
This concludes the proof of the theorem.
\end{proof}
\section{The eigenvalue problem}\label{eigenproblem}
In this section we analyze the eigenvalue problem
\begin{equation}\label{mainproblem}
\begin{cases}
\A u=\lambda g(x, u) &\text{in }\Omega\\
u=0&\text{in }\R^n\setminus \Omega,
\end{cases}
\end{equation}
where $a(x, y, \xi)$ verifies conditions \ref{condicionesa} and, moreover, we also assume the {\em coercitivity condition}
\begin{equation}\label{coersive}
a(x, y, \xi)\xi\ge \theta M(c\xi), \quad \text{for every $\xi\in \R$ and a.e. $(x, y)\in \R^{2n},$}
\end{equation}
for some constants $\theta, c>0$.

On the source term $g\colon \R\to \R$ we will assume that it is an odd and continuous function such that $g(t)>0$ for all $t> 0$ and that it verifies the growth condition
\begin{equation}\label{condg}
|g(x, t)|\leq e(x) + a_1 \bar M^{-1}(a_2 M(a_3 t))\qquad \forall t\geq 0,
\end{equation}
where $a_1,a_2$ and $a_3$ are positive constants and $e(x)\in L_{\bar M}$. 

Next, we define the notion of eigenpair for \eqref{mainproblem}:
\begin{defi}
We say that $(\lambda,u)$ is an eigenpair for \eqref{mainproblem} if $u\in \Dom(\A)$ and
\begin{equation}\label{solmain}
\iint_{\R^{2n}}a(x,y,D^su)D^s v\,d\nu_n=\lambda \int_\Omega g(u)v\,dx,
\end{equation} 
for all $v\in W_0^s L_M(\Omega)$. 
\end{defi}

Recall that $\Dom(\A)\subset W^s_0L_M(\Omega)\subset E_M(\Omega)$ and therefore the right hand side of \eqref{solmain} is well defined. In fact,
$$
\int_\Omega |g(u)||v|\,dx\le \int_\Omega e(x)|v|\, dx + a_1\int_\Omega \bar M^{-1}(a_2 M(a_3 |u|))|v|\, dx.
$$
The first integral is well defined by Young's inequality \eqref{young} and the second one, using again Young's inequality it is bounded by
$$
ka_2\int_\Omega M(a_3 |u|)\, dx + k\int_\Omega M\left(\frac{|v|}{k}\right)\, dx <\infty. 
$$

In order to study problem \eqref{mainproblem} we define the functionals 
$$
\EE \colon \Dom(\EE)\subset W^s_0L_M(\Omega)\to \R \quad \text{and}\quad \mathcal{G}\colon W^s_0L_M(\Omega)\to \R
$$ 
as 
$$
\EE(u):=\iint_{\R^{2n}}A(x,y,D^s u)\,d\nu_n\quad \text{and}\quad \mathcal{G}(u):=\int_\Omega G(u)\,dx,
$$
respectively, where $A(x,y,t)=\int_0^t a(x,y,\tau)\, d\tau, G(t)=\int_0^t g(\tau)\, d\tau$ and
\begin{align*}
&\Dom(\EE):=\{u\in W_0^sL_M(\Omega)\colon  \EE(u)<\infty\}.
\end{align*}
A similar argument as the one used to check the well definition of \eqref{solmain} gives us that $\mathcal G$ is a well defined functional for every $u\in W^s_0L_M(\Omega)$.

We observe that $W_0^sE_M(\Omega)\subset \Dom(\A)\subset \Dom(\EE)\subset W_0^s L_M(\Omega)$, and the functions $\EE$ and $\G$ vanish only at zero. These inclusions are a direct consequence of the point wise inequality
$$
A(x, y, \xi)\le \xi a(x, y, \xi).
$$

Let $B=\{\phi_1,\phi_2,\dots\}\subset {\mathcal D}(\Omega)$ be a countable linearly independent subset in $W_0^sE_M(\Omega)$ such that the linear hull is norm dense, and we define the sets $V$ and $V_k$ as the linear hull of the sets $B$ and $B_k:=\{\phi_1,\phi_2,\dots,\phi_k\}$ respectively. We denote the continuous pairing between $W^{-s}L_{\bar{M}}(\Omega)$ and $W^s_0E_M(\Omega)$ by $\langle \cdot, \cdot \rangle$, and the one between $V_k$ and $(V_k)^*$ by $\langle \cdot, \cdot \rangle_k$.

A straightforward calculation gives
\begin{align*}
\langle (\EE)'(u),v \rangle_k&=\iint_{\R^{2n}} a(x,y,D^su)D^s v\,d\nu_n &\forall u,v\in V_k, 	
\\
\langle \G'(u),v\rangle_k&=\int_\Omega g(u)v\,dx &\forall u,v\in V_k, 	
\end{align*} 
for each $k=1,2,\dots,$ and the fact $V_k\subset W^s_0E_M(\Omega)$ allows us to conclude $\EE,\G \in C^1(V_k)$.

By the growth condition \eqref{condg} and the compact immersion $W^s_0L_M(\Omega) \subset\subset E_M(\Omega)$ (see Theorem B.3 in \cite{FBSP}),  it follows that $\Dom(\G)=W_0^sL_M(\Omega)$ and 
\begin{align*}
\G(u_k) &\to \G(u)\\
\langle \G'(u_k),u_k\rangle_k &\to \int_\Omega g(u) u\, dx\\
\langle \G'(u_k),v\rangle &\to \int_\Omega g(u) v\, dx,
\end{align*}
whenever $u_k\in V_k, u_k\to u\in W_0^sL_M(\Omega)$ in $\sigma(W_0^sL_M(\Omega),W^{-s}E_{\bar{M}}(\Omega)),$ and $v\in V$.

In order to analyze existence of solution, we reduce the problem \eqref{mainproblem} on the finite dimensional space $V_k$ that is
\begin{equation}\label{eigenfinito}
\begin{split}
& (\EE)'(u)=\lambda \mathcal{G}'(u)\text{ in }V_k^*
\\
&u\in \mathcal{N}_k^s,\lambda\in \R, 
\end{split}
\end{equation}
where $\mathcal{N}_k^s=\{u\in V_k\colon \EE(u)=1\}$.

Now, to attack the problem \eqref{mainproblem} we need the following notation 
\begin{align*}
\mathcal{N}^s	&=\{u\in W_0^sE_M(\Omega)\colon \EE(u)=1\}
\\
\mathcal{K}_i^s&=\{K\subset \mathcal{N}^s\text{ compact and symmetric}\colon \text{gen}(K)\geq i\}
\\
\mathcal{K}_{i,k}^s&=\{K\subset \mathcal{N}_k^s\text{ compact and symmetric}\colon \text{gen}(K)\geq i \}
\\
c_i&=\sup_{K\in \mathcal{K}_i^s}\inf_{u\in K}\G(u)
\\
c_{i,k}&=\sup_{K\in \mathcal{K}_{i,k}^s}\inf_{u\in K}\G(u),
\end{align*}
where $\text{gen}(K)$ stands for the Krasnoselskii genus (see \cite[Chapter 7]{Rabi}).

The proof now will make use of the finite dimensional Ljusternik-Schnirelman theory developed in \cite{Zei80}. The following lemma contains the necessary properties for the operator $\EE$ and the level set $\mathcal{N}_k^s$ needed in order to apply \cite[Theorem 2]{Zei80}.

\begin{lema}\label{PropEsa}
\begin{itemize}
\item[a)] $(\EE)'\colon V_k\to V_k^*$ is a potential operator with potential $\EE$.
\item[b)] $(\EE)'$ is uniformly continuous on bounded sets.
\item[c)] There exists a real number $r(u)>0$ such that $\EE(r(u)u)=1$ for every $u\neq 0$.
\item[d)] The level set $\mathcal{N}_k^s=\{u\in V_k\colon \EE(u)=1\}$ is bounded.
\item[e)] If $u\neq 0$ then  $\langle (\EE)'(u),u \rangle>0$ and  $\inf_{u\in \mathcal{N}_k^s}\langle (\EE)'(u),u \rangle>0$.  
\end{itemize}	
\end{lema}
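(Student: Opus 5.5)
All five items are properties of the restriction of $\EE$ to the finite-dimensional space $V_k\subset W_0^sE_M(\Omega)$. The plan is to use two facts throughout. First, on $V_k$ all norms are equivalent, and every $u\in V_k$ is a finite combination of test functions. Hence $D^su\in E_M(\nu_n)$, and by Lemma \ref{WSEINDOM} we have $a(x,y,D^su)\in L_{\bar M}(\nu_n)$. Second, the pointwise facts coming from Conditions \ref{condicionesa} and \eqref{coersive}: $A(x,y,\cdot)$ is even, convex (since $a$ is nondecreasing by \eqref{MC}), vanishes only at $0$, and satisfies
$$
\theta' M(c\xi)\le A(x,y,\xi)\le \xi a(x,y,\xi),
$$
where the lower bound follows by integrating \eqref{coersive} and using that $M(c\tau)/\tau$ is nondecreasing (e.g.\ $A(\xi)\ge\int_{\xi/2}^{\xi}a\ge \theta M(c\xi/2)/\xi\cdot \xi/2$, giving a bound with $c/2$, which suffices).

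For a), I will show that $t\mapsto \EE(u+tv)$ is differentiable with derivative $\iint a(x,y,D^su+tD^sv)D^sv\,d\nu_n$. The difference quotients $\bigl(A(D^su+tD^sv)-A(D^su)\bigr)/t$ are bounded by $|a(x,y,D^su+\tau D^sv)||D^sv|$ with $|\tau|\le |t|\le1$. Using \eqref{GC} and Young's inequality as in Lemma \ref{paralimite}, this is dominated by
$$
\bar M(d/b)+M(2c|D^su|)+M(2c|D^sv|)+M(2b|D^sv|),
$$
which is integrable because $u,v\in W_0^sE_M(\Omega)$. Dominated convergence then gives the Gateaux derivative, and continuity of this derivative (part b) upgrades it to Fréchet, so $(\EE)'$ is a potential operator.

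For b), it suffices to prove continuity, since $V_k$ is finite dimensional and closed bounded sets are compact, so continuity implies uniform continuity on bounded sets. Suppose $u_j\to u$ in $V_k$. Writing $u_j=\sum\alpha^j_i\phi_i$, the coefficients converge, so $D^su_j\to D^su$ a.e. and $|D^su_j|\le C\sum_i|D^s\phi_i|=:H\in E_M(\nu_n)$. For $\|v\|\le1$ in $V_k$, the same domination gives a uniform integrable majorant for $|a(x,y,D^su_j)-a(x,y,D^su)||D^sv|$. Here $|D^sv|\le C\sum|D^s\phi_i|$, which is where finite dimensionality helps, since it lets us avoid needing continuity of the Nemytskii map in $L_{\bar M}$ (which would require $\Delta_2$ for $\bar M$). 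Dominated convergence then gives $\sup_{\|v\|\le1}|\langle(\EE)'(u_j)-(\EE)'(u),v\rangle_k|\to0$. I expect this to be the main obstacle: making the majorant uniform in $v$. Using the coefficient bound on the finite basis is the cleanest route.

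For c), set $\varphi(r)=\EE(ru)$ for $u\neq0$. This function is continuous (by dominated convergence as above), convex, satisfies $\varphi(0)=0$, and is strictly increasing, because $\varphi'(r)=\iint a(ru)\,D^su\,d\nu_n>0$. Moreover $\varphi(r)\ge\theta'\iint M(crD^su)\,d\nu_n\ge \theta' r\iint M(cD^su)\,d\nu_n\to\infty$ for $r\ge1$ by convexity. The intermediate value theorem then gives a unique $r(u)$ with $\varphi(r(u))=1$.

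For d), if $\EE(u)=1$ then $\iint M(cD^su/2)\,d\nu_n\le1/\theta'$. This gives $\|D^su\|_{M,\nu_n}\le C$ (if $1/\theta'>1$, use convexity to scale), so $\mathcal N_k^s$ is bounded.

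For e), $\langle(\EE)'(u),u\rangle=\iint a(D^su)D^su\,d\nu_n\ge\iint A(D^su)\,d\nu_n$. This is positive for $u\ne0$, and on $\mathcal N_k^s$ it is at least $\EE(u)=1$, so the infimum is at least $1>0$.
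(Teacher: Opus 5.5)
Your proposal is correct. Items a), b) and e) follow the paper, with more detail, while c) and d) take a genuinely different route.

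\textbf{Item c).} The paper does not use \eqref{coersive} here. For $r>1$ it uses $A(x,y,r\xi)\ge\int_{\xi}^{r\xi}a(x,y,\tau)\,d\tau\ge(r-1)\,a(x,y,\xi)\xi$, so $\EE(ru)\ge(r-1)\iint_{\R^{2n}}a(x,y,D^su)D^su\,d\nu_n\to\infty$. This needs only the sign and monotonicity requirements of Conditions \ref{condicionesa}. Your bound $\EE(ru)\ge\theta' r\iint_{\R^{2n}}M(c'D^su)\,d\nu_n$ relies on coercivity, which is legitimately assumed in this section. You also prove uniqueness of $r(u)$ via strict monotonicity, which the paper only asserts later.

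\textbf{Item d).} The paper claims that $h(u)=r(u)u$ is a homeomorphism from the unit sphere of $V_k$ onto $\mathcal N_k^s$. This tacitly uses uniqueness and continuity of $r(u)$, which are only established later, in Lemma \ref{r.cont}, whose Step 1 itself invokes coercivity. Your direct modular estimate is shorter and self-contained: $\iint_{\R^{2n}}M(c'D^su)\,d\nu_n\le 1/\theta'$, hence $\|D^su\|_{M,\nu_n}\le\max\{1,1/\theta'\}/c'$. It is also uniform in $k$, indeed valid on all of $\{\EE=1\}$. It is exactly the estimate the paper later uses in the proof of Theorem \ref{teo.main}, and it would directly give the boundedness of $\mathcal N^s$ needed in Theorem \ref{teolambdaiui}. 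Compactness of $\mathcal N_k^s$, if wanted, follows from closedness plus finite dimension.

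\textbf{Items a) and b).} The paper merely says the derivative was computed before and cites the uniform continuity theorem. You supply the continuity of $(\EE)'$ on $V_k$ that the paper leaves implicit. The common majorant built from $\sum_i|D^s\phi_i|$ is the right device, since continuity of $\xi\mapsto a(x,y,\xi)$ as a map into $L_{\bar M}(\nu_n)$ is not available without the $\Delta_2$--condition on $\bar M$.

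\textbf{A cosmetic point.} The clean form of your lower bound is $\int_{\xi/2}^{\xi}a(x,y,\tau)\,d\tau\ge a(x,y,\xi/2)\,\xi/2\ge\theta M(c\xi/2)$. The expression you wrote only loses a harmless factor $1/2$.
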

\begin{proof}
The proof of a) has been observed before.

The item b) is consequence of the uniform continuity theorem.

We prove now item c). Let $u\in V_k$ with $u\not\equiv 0$. Define the function $\varphi\colon \R_+\to \R$ as
$$
\varphi(r):=\EE(ru)=\iint_{\R^{2n}}A(x, y, rD^su)\, d\nu_n.
$$	
We observe that $\varphi$ is a continuous function with $\varphi(0)=0$. Now, for $r>1$, we have
$$
\EE(ru)=\iint_{\R^{2n}} A(x,y,r |D^s u|)\, d\nu_n
$$
On the other hand, for $\xi>0$,
$$
A(x, y, r\xi) = \int_0^{r\xi} a(x, y, \tau)\, d\tau \ge \int_{\xi}^{r\xi} a(x, y, \tau)\, d\tau\ge (r-1)a(x, y, \xi)\xi.
$$
Therefore, we obtain
$$
\varphi(r)=\EE(ru)\ge (r-1)\iint_{\R^{2n}}a(x, y, |D^s u|)|D^s u|\, d\nu_n.
$$
Using now \eqref{MC} we obtain that $\varphi(r)\to\infty$ when $r\to\infty$. Then, by the intermediate value theorem, there exists $0<r(u)$ such that $\varphi(r(u))=1$, that is $\EE(r(u)u)=1$.

Item d) is consequence of item c). Indeed the map $h\colon S^{k-1} \to \mathcal{N}_k^s$, where $S^{k-1}=\{u\in V_k\colon \|u\|_{W^sL_M}=1\}$, defined as $h(u)=r(u)u$ is an homeomorphism therefore $\mathcal{N}_k^s$ is compact and in particular is bounded.

Finally to prove item e) we observe, by the monotonicity of $a(x,y,\cdot)$, 
\begin{equation}\label{cotaA}
A(x, y, \xi) = \int_0^{\xi} a(x, y, \tau)\, d\tau \le a(x, y, \xi)\xi
\end{equation}
and
$$
\langle (\EE)'(u),u \rangle=\iint_{\R^{2n}} a(x,y,D^su)D^s u\,d\nu_n >0 \quad \forall u\neq 0.
$$
Finally, if $u \in \mathcal{N}_k^s$ using \eqref{cotaA} we have
$$
1=\EE(u)=\iint_{\R^{2n}}A(x,y,D^s u)\, d\nu_n \leq \iint_{\R^{2n}}a(x,y,D^su)D^su\,d\nu_n,
$$
this implies $\inf_{u\in \mathcal{N}_k^s}\langle (\EE)'(u),u \rangle\ge 1>0$.  
\end{proof}

Now we are in position to provide solution for the problem \eqref{eigenfinito}.
\begin{lema}\label{soleigenfinito}
Let $i\in \N$ given. Then, for any $k\ge i$  there exist $u_k^i\in W_0^s E_M(\Omega)$, and $\lambda_k^i\in  (0,\infty)$ such that 
\begin{align*}
u_k^i&\in \mathcal{N}_k^s\subset V_k
\\
(\EE)'(u_k^i)&=\lambda_k^i \mathcal{G}'(u_k^i)\qquad \text{in }V_k^{*}
\\
\mathcal{G}(u_k^i)&=c_{i,k}.
\end{align*}
\end{lema}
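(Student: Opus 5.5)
The plan is to apply the finite dimensional Ljusternik--Schnirelmann theorem \cite[Theorem 2]{Zei80} on the finite dimensional space $V_k$. The constraint functional is $\EE$ and the objective is $\G$. Its hypotheses on $\EE$ and on the level set $\mathcal N_k^s$ are exactly items a)--e) of Lemma \ref{PropEsa}. So the work reduces to three things: checking the hypotheses on $\G$, checking that $c_{i,k}$ is a genuine critical level for $k\ge i$, and identifying the sign of the multiplier.

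\textbf{Step 1: hypotheses on $\G$.}
\begin{itemize}
\item Since $V_k\subset W^s_0E_M(\Omega)$ and by the growth condition \eqref{condg}, $\G\in C^1(V_k)$.
\item $\G$ is even, because $g$ is odd.
\item $\G'\colon V_k\to V_k^*$ is uniformly continuous on bounded sets. This follows from continuity of $g$ and because bounded sets of $V_k$ are relatively compact; in finite dimension, continuity on compact sets gives uniform continuity.
\item Since $g(t)t>0$ for $t\ne0$, we get $\langle \G'(u),u\rangle_k=\int_\Omega g(u)u\,dx>0$ and $\G(u)>0$ for $u\neq 0$.
\end{itemize}

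\textbf{Step 2: $c_{i,k}$ is well defined and positive for $k\ge i$.}
\begin{itemize}
\item By item d), the map $h(u)=r(u)u$ from the unit sphere of $V_k$ is an odd homeomorphism onto $\mathcal N_k^s$. Oddness holds because $A$ is even in $\xi$, since $a$ is odd, and so $r(-u)=r(u)$.
\item Hence $\text{gen}(\mathcal N_k^s)=k\ge i$, so $\mathcal K_{i,k}^s\neq\emptyset$.
\item Every $K\in\mathcal K^s_{i,k}$ is compact and does not contain $0$. So $\inf_K\G>0$ by continuity and Step 1, which gives $c_{i,k}>0$.
\item The sup is bounded by $\max_{\mathcal N_k^s}\G<\infty$, by compactness of $\mathcal N_k^s$.
\end{itemize}

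\textbf{Step 3: existence of a constrained critical point.}
\begin{itemize}
\item Theorem 2 of \cite{Zei80} then gives $u_k^i\in\mathcal N_k^s$ with $\G(u_k^i)=c_{i,k}$ that is a critical point of $\G$ restricted to $\mathcal N_k^s$.
\item If one prefers to argue directly, the route is a deformation argument. Using item e), the nonvanishing $\langle(\EE)'(u),u\rangle\ge1$ makes $\mathcal N_k^s$ a $C^1$ manifold. One then builds an odd deformation along the projected gradient of $\G$. If $c_{i,k}$ were not critical, this deformation would push a near-optimal $K$ above the level $c_{i,k}$. Genus is preserved under odd continuous maps, which gives a contradiction.
\item The Lagrange multiplier rule then yields $\mu$ with $\G'(u_k^i)=\mu(\EE)'(u_k^i)$ in $V_k^*$.
\end{itemize}

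\textbf{Step 4: the multiplier.}
\begin{itemize}
\item Test with $u_k^i$: $\int_\Omega g(u_k^i)u_k^i\,dx=\mu\langle(\EE)'(u_k^i),u_k^i\rangle$.
\item The left side is positive since $u_k^i\neq0$, and the right bracket is at least $1$ by item e). So $\mu>0$.
\item Set $\lambda_k^i=1/\mu\in(0,\infty)$. This gives $(\EE)'(u_k^i)=\lambda_k^i\G'(u_k^i)$.
\item Finally, $u_k^i\in V_k\subset W^s_0E_M(\Omega)$.
\end{itemize}

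The main obstacle I expect is Step 3: matching the precise form of Zeidler's hypotheses. In particular, it must be checked that the deformation and critical-point machinery applies with $\EE$ only $C^1$ and $\mathcal N_k^s$ only a topological sphere, possibly not strictly star-shaped in a smooth way. Here item e) must be used to guarantee transversality, so that the radial map and its inverse are continuous.
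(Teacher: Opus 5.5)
Your proposal is correct and takes the same route as the paper, which simply invokes \cite[Theorem 2 and Corollary 7.1]{Zei80} on $V_k$, with the properties of $\EE$ collected in Lemma \ref{PropEsa}. Your additional checks are the details the paper leaves implicit: evenness and positivity of $\G$, the odd homeomorphism giving $\text{gen}(\mathcal N_k^s)=k\ge i$ (hence $\mathcal K^s_{i,k}\neq\emptyset$ and $0<c_{i,k}<\infty$), and positivity of the multiplier obtained by testing with $u_k^i$ and using item e).
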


\begin{proof}
The content of Lemma \ref{PropEsa} puts us in a position to directly apply \cite[Theorem 2 and Corollary 7.1]{Zei80} and that gives us exactly the claim of this lemma.
\end{proof}

In order to show analyze the behavior of the sequence $\{(\lambda_k^i, u_k^i)\}_{k\ge i}$ as $k\to\infty$ we need first to prove that the map $u\mapsto r(u)u$ given by Lemma \ref{PropEsa} defines an homeomorphism between the unit ball of $W^s_0E_M(\Omega)$ and the manifold $\mathcal{N}^s$. We begin with a lemma that shows the continuity of $r(u)$.
\begin{lema}\label{r.cont}
Let $r(u)$ be the unique positive real number $r$ such that $\EE(ru)=1$. Assume that $\|u_k\|_{s, M} = 1$ and that $u_k\to u$ in $W^s_0E_M(\Omega)$, then $r(u_k)\to r(u)$.
\end{lema}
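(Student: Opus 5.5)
The plan is to study, for each fixed $u\neq 0$ in $W^s_0E_M(\Omega)$, the function $\varphi_u(r)=\EE(ru)$ on $[0,\infty)$, and to deduce continuity of $r(u)$ from two facts: each $\varphi_u$ is strictly increasing, and $\varphi_{u_k}\to\varphi_u$ pointwise. First, strict monotonicity: for $0<r_1<r_2$, since $a(x,y,\tau)\tau>0$ for $\tau\neq0$ and $a$ is odd, $A(x,y,r_2\xi)-A(x,y,r_1\xi)=\int_{r_1\xi}^{r_2\xi}a(x,y,\tau)\,d\tau>0$ whenever $\xi\neq 0$. Because $u\neq0$, $D^su\neq0$ on a set of positive $\nu_n$-measure, hence $\varphi_u(r_1)<\varphi_u(r_2)$. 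This also shows that $r(u)$ from Lemma \ref{PropEsa} c) is unique. Note that $\|u\|_{s,M}=1$ because of norm convergence, so $u\neq 0$.

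Second, I will show that $\EE(ru_k)\to\EE(ru)$ for each fixed $r>0$. Since $u_k\to u$ in $W^s_0E_M(\Omega)$, we have $rD^su_k\to rD^su$ in $E_M(\nu_n)$. I will pass to a subsequence converging $\nu_n$-a.e. and apply a Vitali-type argument. By the growth condition \eqref{GC} and \eqref{cotaA},
$$
0\le A(x,y,\xi)\le |a(x,y,\xi)||\xi|\le d|\xi|+b\bar M^{-1}(M(c\xi))|\xi|\le \bar M(d)+M(\xi)+b\bigl(M(c\xi)+M(\xi)\bigr),
$$
using Young's inequality twice. Hence $A(x,y,rD^su_k)\le \bar M(d)+C\,M(c' r D^su_k)$. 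Norm convergence in $E_M(\nu_n)$ implies that $\{M(\lambda D^su_k)\}$ is uniformly integrable and tight for each fixed $\lambda$: with $w_k=D^s u_k-D^s u$, convexity gives $M(\lambda D^su_k)\le \tfrac12 M(2\lambda D^su)+\tfrac12 M(2\lambda w_k)$, and $\int M(2\lambda w_k)\,d\nu_n\to0$ because $\|w_k\|_{M,\nu_n}\to0$. Then dominated convergence in its generalized form (the majorants converge in $L^1$ along a further subsequence) gives $\EE(ru_k)\to\EE(ru)$. Applying this to every subsequence yields the convergence for the whole sequence. Carrying out this integrability step is the main technical point.

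Finally, fix $\varepsilon>0$ and set $r_\pm=r(u)\pm\varepsilon$, with $\varepsilon<r(u)$. Strict monotonicity gives $\varphi_u(r_-)<1<\varphi_u(r_+)$. By the pointwise convergence, for $k$ large we have $\varphi_{u_k}(r_-)<1<\varphi_{u_k}(r_+)$. Since $\varphi_{u_k}$ is increasing and $\varphi_{u_k}(r(u_k))=1$, it follows that $r(u_k)\in(r_-,r_+)$. Thus $r(u_k)\to r(u)$. The hypothesis $\|u_k\|_{s,M}=1$ is only used to guarantee $u_k,u\neq0$; it could also give a priori bounds on $r(u_k)$, but the squeeze argument above does not need them.
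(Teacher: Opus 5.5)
Your proof is correct, but it closes the argument differently from the paper. The paper's route has three steps:
\begin{itemize}
\item it bounds $r_k=r(u_k)$ (Step 1, via $A(x,y,r\xi)\ge (r-1)a(x,y,\xi)\xi$ and the coercivity condition \eqref{coersive}) and extracts a subsequence $r_k\to r_0$;
\item it uses norm continuity of $\EE$ on $W^s_0E_M(\Omega)$ along $r_ku_k\to r_0u$ (Step 2, with the same majorant $\bar M(d)+\tilde b M(c\xi)$ you use) to get $\EE(r_0u)=1$;
\item it identifies $r_0=r(u)$ by strict monotonicity of $r\mapsto\EE(ru)$.
\end{itemize}
You only need continuity of $\EE$ along $ru_k\to ru$ at the two fixed values $r=r(u)\pm\varepsilon$; monotonicity of $\varphi_{u_k}$ then traps $r(u_k)$ between them. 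So you need no a priori bound, no compactness-plus-uniqueness step, and no use of \eqref{coersive}. Your argument also works for any $u_k\to u\neq 0$ in $W^s_0E_M(\Omega)$.

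Your route also avoids a delicate point in the paper's Step 1. There, $\iint M(cD^su_k)\,d\nu_n\to 0$ is said to force $cu_k\to 0$ in norm, but modular convergence at a single scale does not imply norm convergence without the $\Delta_2$--condition. The lower bound needed there does hold, but for a different reason: norm convergence gives $\iint M(cD^su_k)\,d\nu_n\to\iint M(cD^su)\,d\nu_n>0$. Likewise, your justification of modular convergence through $M(\lambda D^su_k)\le\frac12 M(2\lambda D^su)+\frac12 M(2\lambda w_k)$ is more explicit than the bare assertion in the paper's Step 2. What the paper's route buys is a bound on $r$ itself, which it then invokes in Lemma \ref{invH.cont} to show that $\mathcal N^s$ is bounded.
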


\begin{proof}
We divide the proof into several steps for the reader convenience.

{\em Step 1.} We claim that $r_k:=r(u_k)$ is bounded. In fact, we may assume that $r_k>1$ and arguing as in Lemma \ref{PropEsa}, we get
\begin{align*}
1 = \EE(r_k u_k) \ge (r_k-1)\iint_{\R^{2n}} a(x, y, D^s u_k) D^s u_k\, d\nu_n.
\end{align*}
Next we use the coercivity condition \ref{coersive} to obtain
$$
1\ge (r_k-1) \theta \iint_{\R^{2n}} M(cD^s u_k)\, d\nu_n.
$$
Now we claim that there exists a constant $\delta>0$ such that $\iint_{\R^{2n}} M(cD^s u_k)\, d\nu_n\ge \delta$ for every $k\in\N$. Otherwise, $\iint_{\R^{2n}} M(cD^s u_k)\, d\nu_n\to 0$ up to a subsequence, but in turn this implies that $cu_k\to 0$ in $W^s_0E_M(\Omega)$ which is a contradiction.

Combining these estimates. we arrive at
$$
1\ge (r_k-1)\theta\delta
$$
and the claim follows.

Without loss of generality, we can assume that $r_k\to r_0$ as $k\to\infty$.

\medskip

{\em Step 2.} We claim that the functional $\EE$ is continuous in $W^s_0E_M(\Omega)$.

Using the monotonicity of $a(x, y, \xi)$ together with \eqref{GC} and Young's inequality \eqref{young}, we obtain the pointwise estimate
\begin{align*}
A(x, y, \xi) &\le \xi a(x, y, \xi) \le \xi (d(x, y) + b \bar M^{-1}(M(c\xi)))\\
&\le \bar M(d(x, y)) + M(\xi) + b(M(\xi) + M(c\xi)) \le \bar M(d(x, y)) + \tilde b M(c\xi).
\end{align*}

Observe now that if $v_k\to v$ in $W^s_0E_M(\Omega)$, then $M(cD^s v_k)\to M(c D^s v)$ $\nu_n-$a.e. and, moreover, $\iint_{\R^{2n}}M(cD^s v_k)\, d\nu_n\to  \iint_{\R^{2n}}M(cD^s v)\, d\nu_n$. Therefore, we can apply the Dominated Convergence Theorem to conclude that
$$
\iint_{\R^{2n}} A(x, y, D^s v_k)\, d\nu_n\to \iint_{\R^{2n}} A(x, y, D^s v)\, d\nu_n,
$$
from where the claim follows.

As a corollary of Steps 1 and 2, we have that $\EE(r_k u_k)\to \EE(r_0 u)$.

\medskip

{\em Step 3.} Since $\EE(r_k u_k)=1$ for every $k\in\N$, it follows from the previous steps that $\EE(r_0 u)= 1$. But, since $\varphi(r) = \EE(ru)$ is strictly increasing, we can conclude that $r(u)$ is unique, and so $r(u)=r_0$.
\end{proof}

\begin{lema}\label{invH.cont}
There exist constants $\delta>0$  such that $\delta \le \|u\|_{s, M}\le \delta^{-1}$ for every $u\in \mathcal N^s$. i.e. $\mathcal N^s$ is bounded away from zero and infinity.
\end{lema}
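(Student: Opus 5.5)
The plan is to obtain both bounds from pointwise comparisons between $A(x,y,\xi)$ and the Young function $M$, combined with the standard relation between the modular $\iint M(D^s u)\,d\nu_n$ and the Luxemburg norm. Recall that on $W^s_0L_M(\Omega)$ the norm is equivalent to $\|D^su\|_{M,\nu_n}$ by Poincar\'e's inequality, so it suffices to bound $\|D^su\|_{M,\nu_n}$ from above and below for $u\in\mathcal N^s$.

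\emph{Upper bound.} Since $a(x,y,\cdot)$ is odd and monotone, $a(x,y,\tau)$ has the sign of $\tau$. For $\xi>0$ we therefore have
$$
A(x,y,\xi)\ge\int_{\xi/2}^{\xi}a(x,y,\tau)\,d\tau\ge \tfrac{\xi}{2}\,a(x,y,\xi/2)\ge \theta M(c\xi/2),
$$
where the last step uses the coercivity condition \eqref{coersive} at the point $\xi/2$. By oddness the same estimate holds for $\xi<0$ with $|\xi|$. Hence, for $u\in\mathcal N^s$,
$$
1=\EE(u)\ge\theta\iint_{\R^{2n}} M\bigl(\tfrac c2 D^su\bigr)\,d\nu_n .
$$
Since $M(t)/t$ is nondecreasing, if $\iint M(w)\,d\nu_n\le K$ with $K\ge1$ then $\iint M(w/K)\,d\nu_n\le 1$, and so $\|w\|_{M,\nu_n}\le K$. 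Applying this with $w=\frac c2 D^su$ and $K=\max(1,\theta^{-1})$ gives $\|D^su\|_{M,\nu_n}\le 2\max(1,\theta^{-1})/c$.

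\emph{Lower bound.} From Step 2 of Lemma \ref{r.cont} we have the pointwise estimate $A(x,y,\xi)\le \bar M(d(x,y))+\tilde b M(c\xi)$. The term $\bar M(d)$ has finite integral, but it does not become small as $u\to0$, so this estimate alone is not enough. To remove it, argue by contradiction. Suppose $u_k\in\mathcal N^s$ with $\|u_k\|_{s,M}\to0$. Then $\|D^su_k\|_{M,\nu_n}\to0$, and by the norm–modular relation, once $\|c\,D^su_k\|_{M,\nu_n}\le 1$ we get $\iint M(cD^su_k)\,d\nu_n\le \|cD^su_k\|_{M,\nu_n}\to0$. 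It remains to show $\EE(u_k)\to0$, which contradicts $\EE(u_k)=1$. This is the step I expect to be the main obstacle.

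Convergence $D^su_k\to0$ in measure (along a subsequence, $\nu_n$-a.e.) gives $A(x,y,D^su_k)\to0$ pointwise by continuity, with $A(x,y,0)=0$. For uniform integrability I would use, instead of the cruder bound above, the sharper estimate
$$
A(x,y,\xi)\le |\xi|\,|a(x,y,\xi)|\le |\xi|\,d+b|\xi|\bar M^{-1}(M(c\xi)),
$$
and split the terms as follows. The part $b|\xi|\bar M^{-1}(M(c\xi))\le C\,M(c'\xi)$ follows from $\bar M^{-1}(M(t))\le 2M(t)/t$, and its integral tends to $0$. For the part $|\xi|\,d$, apply Young's inequality with a parameter $\ve$: $|\xi|\,d\le \ve \bar M(d/\ve)+\ve M(|\xi|)$ for $\ve\le1$. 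Then $\iint \ve\bar M(d/\ve)$ is finite because $d\in E_{\bar M}(\nu_n)$. I would sharpen this by the Vitali-type argument that the family $\{|D^su_k|\,d\}$ is uniformly integrable: $d\in E_{\bar M}$ has absolutely continuous norm, and $\|D^su_k\|_M\to0$, so by H\"older's inequality $\iint|D^su_k|\,d\le 2\|D^su_k\|_M\|d\|_{\bar M}\to0$ directly. This gives $\EE(u_k)\to0$, the desired contradiction, so the lower bound $\delta$ exists. Taking the smaller of the two resulting constants yields a single $\delta$.
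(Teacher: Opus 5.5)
Your proof is correct. It follows the paper's overall scheme: coercivity \eqref{coersive} gives the upper bound, and the growth condition \eqref{GC} gives the lower bound. Both halves, however, are carried out differently.

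\textbf{Upper bound.} The paper reduces it to boundedness of $r(v)$ on the unit sphere, via Step 1 of Lemma \ref{r.cont}. That step needs a uniform positive lower bound for $\iint M(cD^sv)\,d\nu_n$ when $\|v\|_{s,M}=1$. The paper justifies this by ``modular small implies norm small'', which is not automatic without the $\Delta_2$--condition when $c<1$. Your direct estimate $A(x,y,\xi)\ge\theta M(c\xi/2)$ is the computation the paper only uses later, in the proof of Theorem \ref{teo.main}. It needs only ``modular bounded implies norm bounded'', which always holds by convexity. So it is cleaner and gives an explicit constant.

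\textbf{Lower bound.} The paper intends to use continuity of $\EE$ at $0$, as in Step 2 of Lemma \ref{r.cont}. The dominating functions $\bar M(d)+\tilde b M(cD^su_k)$ converge in $L^1(\nu_n)$, and $A(x,y,D^su_k)\to 0$ a.e. along a subsequence. The generalized dominated convergence theorem then gives $\EE(u_k)\to0$. So your worry that the $\bar M(d)$ term ``does not become small'' is unfounded: it is only part of the dominating function, and only the pointwise limit of the integrand matters.

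Your H\"older route avoids subsequences and dominated convergence altogether, and it is quantitative. Combine $\iint |D^su|\,d\,d\nu_n\le 2\|D^su\|_{M,\nu_n}\|d\|_{\bar M,\nu_n}$ with $|\xi|\bar M^{-1}(M(c\xi))\le \frac{2}{c}M(c\xi)$. Whenever $\|D^su\|_{M,\nu_n}\le 1/c$, this gives
$$
1=\EE(u)\le \bigl(2\|d\|_{\bar M,\nu_n}+2b\bigr)\|D^su\|_{M,\nu_n}.
$$
Hence $\|D^su\|_{M,\nu_n}\ge\min\{1/c,\,(2\|d\|_{\bar M,\nu_n}+2b)^{-1}\}$ directly, with no contradiction argument. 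Your remarks about pointwise convergence, the $\ve$-Young inequality and the Vitali-type argument are unnecessary and can be dropped.
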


\begin{proof}
Observe that the proof of Lemma \ref{r.cont}, Step 1 gives us that $\|u\|_{s, M}$ is uniformly bounded for $u\in \mathcal N^s$. In fact, if $u\in \mathcal N^s$, then $v=u/\|u\|_{s, M}$ satisfies that $\|v\|_{s, M}=1$ and $r(v)=\|u\|_{s, M}$. Since Step 1 in Lemma \ref{r.cont} says that $r(v)$ is uniformly bounded for $\|v\|_{s, M}=1$ then the claim follows.

In order to obtain the lower bound, we argue similarly to the proof of Step 2 in Lemma \ref{r.cont}.

\end{proof}
\begin{prop}\label{H.homeo}
The map $H\colon B_{W^s_0E_M(\Omega)} \to \mathcal N^s$ given by $H(u) = r(u)u$ is an homeomorphism.
\end{prop}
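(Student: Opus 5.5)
Here $B_{W^s_0E_M(\Omega)}$ should be read as the unit sphere $\{u\in W^s_0E_M(\Omega)\colon \|u\|_{s,M}=1\}$, since $r(\cdot)$ is only defined for $u\neq0$ and $H$ cannot be injective on a full ball. The plan is to write down an explicit inverse, $H^{-1}(v)=v/\|v\|_{s,M}$, and then show that $H$ is a continuous bijection with continuous inverse.

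\emph{Well-definedness.} For $u$ on the unit sphere, Lemma \ref{PropEsa} c) gives $r(u)>0$ with $\EE(r(u)u)=1$. Step 3 of Lemma \ref{r.cont} shows that $\varphi(r)=\EE(ru)$ is strictly increasing, so $r(u)$ is unique. Since $W^s_0E_M(\Omega)$ is a linear space, $H(u)=r(u)u\in\mathcal N^s$.

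\emph{Bijectivity.} Define $J\colon\mathcal N^s\to B_{W^s_0E_M(\Omega)}$ by $J(v)=v/\|v\|_{s,M}$. This is well defined because $v\neq0$: indeed $\EE(0)=0\neq1$, and Lemma \ref{invH.cont} gives $\|v\|_{s,M}\ge\delta$.
\begin{itemize}
\item For $v\in\mathcal N^s$, the number $\|v\|_{s,M}$ satisfies $\EE(\|v\|_{s,M}J(v))=\EE(v)=1$. By uniqueness, $r(J(v))=\|v\|_{s,M}$, hence $H(J(v))=v$.
\item For $u$ with $\|u\|_{s,M}=1$, we have $\|H(u)\|_{s,M}=r(u)$, hence $J(H(u))=u$.
\end{itemize}
So $H$ is a bijection with $H^{-1}=J$.

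\emph{Continuity of $H$.} Take $u_k\to u$ in norm with $\|u_k\|_{s,M}=\|u\|_{s,M}=1$. Lemma \ref{r.cont} gives $r(u_k)\to r(u)$. Then
\[
\|r(u_k)u_k-r(u)u\|_{s,M}\le |r(u_k)-r(u)|+r(u)\|u_k-u\|_{s,M}\to0 .
\]
Strictly, Lemma \ref{r.cont} proves convergence along a subsequence, using Step 1 boundedness and $r_k\to r_0$. The uniqueness in Step 3 identifies every subsequential limit as $r(u)$, so the whole sequence converges.

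\emph{Continuity of $H^{-1}$.} The map $v\mapsto v/\|v\|_{s,M}$ is continuous on $W^s_0E_M(\Omega)\setminus\{0\}$. Lemma \ref{invH.cont} keeps $\mathcal N^s$ away from $0$, and in fact makes the norm uniformly bounded below. Hence $J$ is continuous, indeed locally Lipschitz: for $v,w\in\mathcal N^s$,
\[
\|J(v)-J(w)\|_{s,M}\le \frac{2\|v-w\|_{s,M}}{\delta}.
\]

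\emph{Main obstacle.} All the real content sits in the continuity of $r(\cdot)$ (Lemma \ref{r.cont}) and the lower bound $\|v\|_{s,M}\ge\delta$ on $\mathcal N^s$ (Lemma \ref{invH.cont}), which the paper only sketches.

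If I needed to justify the lower bound myself, I would argue by contradiction. Suppose $v_k\in\mathcal N^s$ with $\|v_k\|_{s,M}\to0$. Then $v_k\to0$ in $W^s_0E_M(\Omega)$, and the continuity of $\EE$ on $W^s_0E_M(\Omega)$ (Step 2 of Lemma \ref{r.cont}) gives $\EE(v_k)\to\EE(0)=0$. This contradicts $\EE(v_k)=1$.

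There is one subtlety in Step 2. The dominating bound $\bar M(d)+\tilde b M(cD^sv_k)$ requires $d\in E_{\bar M}(\nu_n)$, so that $\bar M(d)$ is integrable. It also requires convergence of $\iint M(cD^sv_k)\,d\nu_n$, which holds because norm convergence in $E_M$ implies modular convergence for every multiple. With these two facts, the generalized dominated convergence theorem applies. I expect this modular-convergence point to be the only delicate step; the rest is formal.
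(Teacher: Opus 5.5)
Your proposal is correct and follows the same route as the paper: you exhibit the inverse $v\mapsto v/\|v\|_{s,M}$, obtain continuity of $H$ from Lemma~\ref{r.cont}, and obtain continuity of $H^{-1}$ from the fact (Lemma~\ref{invH.cont}) that $\mathcal N^s$ stays away from $0$. Your extra details make explicit what the paper leaves implicit: reading $B_{W^s_0E_M(\Omega)}$ as the unit sphere (as the paper itself does in Theorem~\ref{teolambdaiui}), checking $H\circ H^{-1}=\mathrm{id}$ through uniqueness of $r$, and the subsequence argument that upgrades Lemma~\ref{r.cont} to convergence of the whole sequence $r(u_k)$.
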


\begin{proof}
We observe that Lemma \ref{r.cont} implies that $H$ is continuous. On the other hand a straightforward calculation gives $H^{-1}(v) = v/\|v\|_{s, M}$. Finally, Lemma \ref{invH.cont} give us the continuity of $H^{-1}$. Therefore $H$ is a homeomorphism.
\end{proof}

The following theorem gives the asymptotic behavior of the sequence $\{(\lambda_k^i, u_k^i)\}_{k\ge i}$.
\begin{teo}\label{teolambdaiui}
Let $i\in\N$ fixed and let $\{(\lambda_k^i, u_k^i)\}_{k\ge i}\subset (0, +\infty)\times W^s_0 E_M(\Omega)$ be the sequence given in Lemma \ref{soleigenfinito}. Then there exists $u_i\in \Dom(\A)$ and $\lambda_i\in (0,+\infty)$ such that up to a subsequence, $\lambda_k^i\to \lambda_i$ and $u_k^i\to u_i$ for $\sigma(W_0^sL_M(\Omega),W^{-s}E_{\bar{M}}(\Omega))$. Moreover, $\EE(u_i)=1,\G(u_i)=\lim_{k\to\infty}c_{i,k}$ and $(\lambda_i,u_i)$ is an eigenpair for \eqref{mainproblem}.	
\end{teo}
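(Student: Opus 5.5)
The plan is a Galerkin limit argument modelled on \cite{Ti} and \cite{FBSP}, with the nonlinear part handled by the pseudomonotonicity of Theorem \ref{pseudo}.

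\emph{Step 1: uniform bounds.} Since $\EE(u_k^i)=1$, the coercivity condition \eqref{coersive} together with $A(x,y,\xi)\ge$ a multiple of $M(c\xi/2)$ gives that $\iint M(c D^s u_k^i/2)\,d\nu_n$ is bounded. Hence $\{u_k^i\}_k$ is bounded in $W_0^sL_M(\Omega)$. By the complementary system \eqref{CS} and the separability of $W^{-s}E_{\bar M}(\Omega)$, a subsequence converges to some $u_i$ in $\sigma(W_0^sL_M(\Omega),W^{-s}E_{\bar M}(\Omega))$.

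The compact embedding gives $\G(u_k^i)\to\G(u_i)$. For the eigenvalues, test the finite-dimensional equation with $u_k^i$:
\[
\lambda_k^i\int_\Omega g(u_k^i)u_k^i\,dx=\iint a(x,y,D^su_k^i)D^su_k^i\,d\nu_n\ge 1,
\]
by Lemma \ref{PropEsa} e). To bound $\lambda_k^i$ from above, note that $c_{i,k}$ is nondecreasing in $k$ (since $\mathcal K_{i,k}^s\subset\mathcal K_{i,k+1}^s$) and $c_{i,i}>0$. Therefore $\G(u_k^i)\ge c_{i,i}>0$, which keeps $u_k^i$ away from $0$ in $E_M$ and gives $\int g(u_k^i)u_k^i\ge\delta>0$.

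For the upper bound on $\iint a\,D^su_k^i\,D^su_k^i$, I would use $a(\xi)\xi\le A(2\xi)$ (by monotonicity) together with the bound on $\EE$. This is delicate without $\Delta_2$, so I expect to argue instead via the uniform boundedness step in the proof of Theorem \ref{pseudo}. Once $\lambda_k^i$ is bounded, pass to a subsequence with $\lambda_k^i\to\lambda_i\in(0,\infty)$, and $\G(u_i)=\lim c_{i,k}$.

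\emph{Step 2: passing to the limit in the equation.} For fixed $v\in V_j$ and $k\ge j$,
\[
\langle \A u_k^i,v\rangle=\lambda_k^i\int_\Omega g(u_k^i)v\,dx\to\lambda_i\int_\Omega g(u_i)v\,dx.
\]
As in Theorem \ref{pseudo}, $a(x,y,D^su_k^i)$ is bounded in $L_{\bar M}(\nu_n)$, so along a subsequence $\A u_k^i\to f$ in $\sigma(W^{-s}L_{\bar M},W_0^sE_M)$. Density of $V$ identifies $f=\lambda_i g(u_i)$.

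To apply Theorem \ref{pseudo} we need the limsup condition
\[
\langle \A u_k^i,u_k^i\rangle=\lambda_k^i\int g(u_k^i)u_k^i\to\lambda_i\int g(u_i)u_i=\langle f,u_i\rangle,
\]
which follows from the convergences of $\G'$ listed before \eqref{eigenfinito}. Theorem \ref{pseudo} then yields $u_i\in\Dom(\A)$ and $\A u_i=\lambda_i g(u_i)$ against all $v\in W_0^sE_M$. The $\sigma$-density of $W_0^sE_M$ in $W_0^sL_M$ extends this to all $v\in W_0^sL_M(\Omega)$, so $(\lambda_i,u_i)$ is an eigenpair.

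\emph{Step 3: $\EE(u_i)=1$ (the main obstacle).} Weak lower semicontinuity of the convex functional $\EE$ gives $\EE(u_i)\le1$. For the reverse inequality, use convexity:
\[
\EE(u_k^i)\le\EE(u_i)+\iint a(x,y,D^su_k^i)(D^su_k^i-D^su_i)\,d\nu_n.
\]
The last term equals $\langle\A u_k^i,u_k^i\rangle-\langle\A u_k^i,u_i\rangle$. The first piece converges to $\langle f,u_i\rangle$ by Theorem \ref{pseudo}.

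The second piece is the delicate one, since $u_i\notin W_0^sE_M$ in general and the weak convergence is only against $E$-type functions. I would handle it by first writing
\[
\langle \A u_k^i,u_i\rangle=\lambda_k^i\int g(u_k^i)u_i\,dx,
\]
which requires justifying the equation for test function $u_i$. This is done by approximating $u_i$ by elements of $V$ in the modular sense, as in Gossez's method and the truncation sets $R_j$ used in Lemma \ref{paraEulerLagrange}. The right-hand side then converges to $\lambda_i\int g(u_i)u_i=\langle f,u_i\rangle$. Hence the last term tends to $0$, giving $1\le\EE(u_i)$ and so $\EE(u_i)=1$.
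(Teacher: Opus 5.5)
Your modular bound $\iint M(cD^su_k^i/2)\,d\nu_n\le 1/\theta$ is fine, and is a cleaner route to boundedness than the paper's use of $H$. Step 2, once $\lambda_k^i$ is bounded, and the inequality $\EE(u_i)\le 1$ are also fine. There are two genuine gaps.

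\textbf{First gap: the upper bound on $\lambda_k^i$.} You never obtain it, and the route you suggest is circular. The uniform $L_{\bar M}(\nu_n)$ bound on $a(x,y,D^su_k^i)$ in the proof of Theorem \ref{pseudo} needs, as its term $I$, a bound on $\iint a(x,y,D^su_k^i)D^su_k^i\,d\nu_n=\lambda_k^i\int_\Omega g(u_k^i)u_k^i\,dx$. That is exactly the quantity you are trying to control. The estimate $a(\xi)\xi\le A(2\xi)$ would instead need a uniform bound on $\EE(2u_k^i)$, which is unavailable without $\Delta_2$.

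The missing idea is a Minty-type argument by contradiction. Suppose $\lambda_k^i\to\infty$ along a subsequence. Since $\G(u_i)=\lim c_{i,k}\ge c_{i,i}>0$, you have $g(u_i)\not\equiv 0$, so you can fix $\phi\in V_{k_0}$ with $\int_\Omega g(u_i)(u_i-\phi)\,dx<0$. For $k\ge k_0$, $u_k^i-\phi\in V_k$ is an admissible test function, and monotonicity gives
\[
\lambda_k^i\int_\Omega g(u_k^i)(u_k^i-\phi)\,dx=\iint_{\R^{2n}} a(x,y,D^su_k^i)(D^su_k^i-D^s\phi)\,d\nu_n\ge\iint_{\R^{2n}} a(x,y,D^s\phi)(D^su_k^i-D^s\phi)\,d\nu_n .
\]
The right-hand side stays bounded, since $a(x,y,D^s\phi)\in L_{\bar M}(\nu_n)$ is fixed and $D^su_k^i$ is bounded in $L_M(\nu_n)$. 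The left-hand side tends to $-\infty$. This is a contradiction, and only after it can the boundedness argument of your Step 2 be run.

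\textbf{Second gap: the identity in Step 3.} The identity $\langle\A u_k^i,u_i\rangle=\lambda_k^i\int_\Omega g(u_k^i)u_i\,dx$ is false in general, not merely unjustified. $u_k^i$ is a Galerkin solution, so $(\EE)'(u_k^i)=\lambda_k^i\G'(u_k^i)$ holds only in $V_k^*$, and in general not against test functions in $V\setminus V_k$. No approximation of $u_i$ by elements of $V$ can repair this. Moreover, $\A u_k^i\to f$ only weakly-$*$ against $W_0^sE_M(\Omega)$, which need not contain $u_i$.

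The paper avoids pairing $\A u_k^i$ with $u_i$ altogether:
\begin{itemize}
\item Theorem \ref{pseudo} gives $\A u_i=f$, hence $\iint a(x,y,D^su_k^i)D^su_k^i\,d\nu_n\to\iint a(x,y,D^su_i)D^su_i\,d\nu_n$.
\item The compact embedding $W_0^sL_M(\Omega)\subset\subset E_M(\Omega)$ gives, for a subsequence, $D^su_k^i\to D^su_i$ $\nu_n$-a.e.
\item The nonnegative integrands $a(x,y,D^su_k^i)D^su_k^i$ therefore converge a.e. with convergent integrals, hence in $L^1(\R^{2n},d\nu_n)$ (Lemma 4.4 in \cite{FBSP}, a Scheff\'e-type argument).
\item This yields a majorant $h$ with $0\le A(x,y,D^su_k^i)\le a(x,y,D^su_k^i)D^su_k^i\le h$, and dominated convergence gives $\EE(u_i)=\lim_k\EE(u_k^i)=1$.
\end{itemize}
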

\begin{proof}
We observe that $\{u_k^i\}_{k\ge i}\subset \mathcal{N}_k^s\subset \mathcal{N}^s$ and by Lemma \ref{H.homeo} there exists an homeomorphism $H\colon \{u\in W_0^s E_M(\Omega)\colon \|u\|_{W^sL_M(\Omega)}=1\}\to \mathcal{N}^s$ defined as $H(u)=r(u)u$ where $r(u)>0$ is the same that in Lemma \ref{PropEsa}, therefore $\mathcal{N}^s$ is bounded and so $\{u_k\}_{k=i}^\infty$ is uniformly bounded  in $W_0^sL_M(\Omega)$ then there exists $u_i\in W_0^sL_M(\Omega)$ such that $u_k\to u_i$ in $\sigma(W_0^sL_M(\Omega),W^{-s}E_{\bar{M}}(\Omega))$, Lemma \ref{soleigenfinito} give us $\lim_{k\to \infty} c_{i,k}=\lim_{k\to\infty}\G(u_k)=\G(u_i)$.

We prove now the convergence of the sequence $\{\lambda_k\}_{k=i}^\infty$. Suppose that $\lambda_k\to +\infty$ if $k\to \infty$, since $\{c_{i,k}\}_{k=i}^{\infty}$ are increasing and positive we have $\G(u_i)>0$ implying $g(u_i)\not\equiv  0$ (and therefore $u_i\not\equiv 0$). Then there exists $\phi\in  V_{k_0}$ for some $k_0>i$ such that 
$$
\int_\Omega g(u_i)(u_i-\phi)\,dx<0.
$$    
Using the monotonicity condition  \eqref{MC} we have
$$
\iint_{\R^{2n}}(a(x,y,D^s u_k)-a(x,y,D^s \phi))(D^s u_k-D^s \phi)\,d\nu_n\geq 0,
$$
from where we obtain
$$
\iint_{\R^{2n}} a(x,y,D^s u_k)(D^s u_k-D^s \phi)\,d\nu_n\geq \iint_{\R^{2n}} a(x,y,D^s \phi )(D^s u_k-D^s \phi)\,d\nu_n, 
$$
and using that $(\lambda_k,u_k)$ is solution 
$$
\lambda_k\int_\Omega g(u_k)(u_k-\phi)\,dx\geq \iint_{\R^{2n}} a(x,y,D^s \phi )(D^s u_k-D^s \phi)\,d\nu_n,
$$
the left hand side in the above inequality tends to $-\infty$ and this is a contradiction therefore there exists $\lambda_i$ such that $\lambda_k\to \lambda_i$ if $k\to\infty$.

Now we prove that the sequence $\{a(x,y,D^su_k)\}_{k=i}^\infty$ is uniformly bounded in $L_{\bar{M}}(\nu_n)$. Let $V\in L^{\infty}(\R^{2n},\nu_n)$ with compact support, again by the monotonicity condition \ref{MC} we have
$$	
\iint_{\R^{2n}}(a(x,y,D^s u_k)-a(x,y,V))(D^s u_k-V)\,d\nu_n\geq 0,
$$
from where we obtain
\begin{align*}
\iint_{\R^{2n}}a(x,y,D^s u_k) V\,d\nu_n &\leq \iint_{\R^{2n}}a(x,y,D^s u_k) D^s u_k\,d\nu_n -\\
& - \iint_{\R^{2n}}a(x,y,V) D^s u_k\,d\nu_n+	
\\
& +\iint_{\R^{2n}}a(x,y,V) V\,d\nu_n
\\
& = I+II+III,
\end{align*}
the first integral $I$ is $\lambda_k \int_\Omega g(u_k) u_k dx\to \lambda_i \int_\Omega g(u_i) u_i dx$ therefore $I$ is bounded, the second integral $II$ is bounded since $u_k\to u_i$ in $\sigma(W_0^s L_M(\Omega),W^{-s}L_{\bar{M}}(\Omega))$ and third integral $III$ doesn't depends on $k$ then there exists a constant $C>0$ such that 
$$
\iint_{\R^{2n}}a(x,y,D^s u_k) V\,d\nu_n\leq C,
$$
for all $V\in L^{\infty}(\R^{2n},\nu_n)$ with compact support and by density for all $V\in E_M(\nu_n)$ therefore by the uniform boundedness principle the sequence $\{a(x,y,D^su_k)\}_{k=i}^\infty$ is uniformly bounded in $L_{\bar{M}}(\nu_n)$ and so there exists $F\in L_{\bar{M}}(\nu_n)$ such that $a(x,y,D^su_k)\to F$ in $\sigma(L_{\bar{M}}(\nu_n),E_M (\nu_n))$ and  $ \A u_k=-2\mathrm{div}^s(a(x,y,D^su_k))\to -2 \mathrm{div}^s(F):=f$ in $\sigma(W^{-s}L_{\bar{M}}(\Omega), W_0^s E_M(\Omega))$. Also, for $\phi\in {\mathcal D}(\Omega)$ we have
\begin{align*}
\langle f,\phi\rangle&=\lim_{k\to\infty} \langle \A u_k,\phi \rangle
\\
&=\lim_{k\to\infty}\iint_{\R^{2n}}a(x,y,D^s u_k)D^s\phi\,d\nu_n\\
&=\lim_{k\to\infty} \lambda_k \int_\Omega  g(u_k)\phi\,dx
\\
&=\lambda_i	\int_\Omega  g(u_i)\phi\,dx,
\end{align*}
since ${\mathcal D}(\Omega)$ is norm dense in $W_0^s E_M(\Omega)$ and $W_0^s E_M(\Omega)$ is dense in $W_0^s L_M(\Omega)$ in the topology $\sigma(L_M\times L_M (\nu_n), L_{\bar{M}}\times L_{\bar{M}} (\nu_n))$ it follows that 
$$
\langle f,\phi\rangle=\lambda_i	\int_\Omega  g(u_i)\phi\,dx\qquad \forall \phi\in W_0^s L_M(\Omega.
$$
On the other hand
\begin{align*}
\lim_{k\to\infty} \langle \A u_k,u_k \rangle	&=\lim_{k\to\infty}\iint_{\R^{2n}}a(x,y,D^su_k)D^su_k\,d\nu_n
\\
&=\lim_{k\to\infty}\lambda_k \int_\Omega g(u_k)u_k\,dx
\\
&=\lambda_i\int_\Omega g(u_i)u_i\,dx
\\
&=\langle f,u_i \rangle.
\end{align*}
That is
\begin{equation}\label{convoperadores}
\lim_{k\to\infty} \langle \A u_k,u_k \rangle	= \langle f,u_i \rangle.
\end{equation}
The above and the pseudomonotonicity of the operator $\A$ implies that $u_i\in Dom(\A)$ and $\A u_i=f$, it follows for $\phi\in W_0^s L_M(\Omega)$
$$
\iint_{\R^{2n}}a(x,y,D^s u_i)D^s \phi\,d\nu_n=\langle \A u_i,\phi \rangle=\langle f,\phi\rangle=\lambda_i	\int_\Omega  g(u_i)\phi\,dx,
$$ 
then $(\lambda_i,u_i)\in (0,+\infty)\times Dom(\A)$ is an eigenpair for \eqref{mainproblem}.

Finally, by Lemma 4.4 in \cite{FBSP} and \eqref{convoperadores} we have that $a(x,y,D^s u_k)D^s u_k\to a(x,y,D^s u_i)D^s u_i$ in $L^1(\R^{2n},d\nu_n)$ and so, by \cite[Theorem 4.9]{brezis}, there exists a integrable majorant $h\in L^1(\R^{2n},d\nu_n)$ such that $|a(x,y,D^s u_k)||D^s u_k|\leq h$ $\nu_n-$a.e. in $\R^{2n}$  then 
$$
|A(x,y,D^s u_k)|=\left | \int_0^{D^s u_k}a(x,y,\tau)\,d\tau\right |\leq a(x,y,D^s u_k)D^s u_k\leq h,
$$
$\nu_n-$a.e. in $\R^{2n}$. By the compact immersion $W^s_0 L_M(\Omega)\subset \subset E_M(\Omega)$ there exists a subsequence that we still calling $\{u_k\}_{k=i}^\infty$ such that $u_k\to u_i$ a.e. in $\Omega$, and extending $u_i=0$ in $\Omega^c$ we have $D^su_k\to D^s u_i$ $\nu_n-$a.e. in $\R^{2n}$, and consequently $A(x,y,D^s u_k)\to A(x,y,D^s u_i)$ $\nu_n-$a.e. in $\R^{2n}$. It follows from the dominated convergence theorem
\begin{align*}
\EE(u_i)&=\iint_{\R^{2n}} A(x,y,D^s u_i)\,d\nu_n	
\\
&=\lim_{k\to\infty}\iint_{\R^{2n}}A(x,y,D^s u_k)\,d\nu_n
\\
&=\lim_{k\to\infty}\EE(u_k)
\\
&=1.
\end{align*}
\end{proof}
Now to give the correct asymptotic behavior of the eigenpairs $\{(\lambda_i,u_i)\}_{i=1}^\infty$ we need the following two lemmas whose proofs can be found in \cite{FBSP} or \cite{Ti}.
\begin{lema}
	\label{ciktoci}
Let $i\in\N$ be fixed. Then $c_{i,k}\to c_i$ as $k\to\infty$.
\end{lema}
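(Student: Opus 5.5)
We argue by two inequalities. The upper bound is immediate: since $V_k\subset W^s_0E_M(\Omega)$, we have $\mathcal N_k^s\subset\mathcal N^s$. Hence every compact symmetric $K\subset\mathcal N_k^s$ with $\text{gen}(K)\ge i$ belongs to $\mathcal K_i^s$, so $\mathcal K_{i,k}^s\subset\mathcal K_i^s$ and $c_{i,k}\le c_i$ for all $k\ge i$. The same inclusion $\mathcal N_k^s\subset\mathcal N_{k+1}^s$ shows that $c_{i,k}$ is nondecreasing in $k$, so $\lim_k c_{i,k}$ exists and is at most $c_i$. It therefore suffices to show that for every $\varepsilon>0$ there is $k$ with $c_{i,k}\ge c_i-\varepsilon$.

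For the lower bound, fix $\varepsilon>0$ and pick $K\in\mathcal K_i^s$ with $\inf_K\G\ge c_i-\varepsilon$. We approximate $K$ by a set in $\mathcal N_k^s$. Since $V=\bigcup_k V_k$ is norm dense in $W^s_0E_M(\Omega)$ and $K$ is compact, for any $\eta>0$ there exist $k$ and a continuous odd map $P_k\colon K\to V_k$ with $\|P_ku-u\|_{s,M}<\eta$ for all $u\in K$. We build $P_k$ with a finite partition of unity subordinate to an $\eta$-net $\{w_j\}$ of $K$, chosen symmetric: $w_j$ and $-w_j$ both appear, and the partition functions are symmetrized. We then replace each $w_j$ by an approximant in $V_k$, taking the approximant of $-w_j$ to be minus that of $w_j$, so that $P_k$ is odd. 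By Lemma \ref{invH.cont}, $K$ stays at distance at least $\delta$ from $0$, so for $\eta<\delta$ the map $P_k$ does not vanish on $K$. We then normalize onto the level set via $Q_k(u)=H(P_ku/\|P_ku\|_{s,M})=r(P_ku)P_ku\in\mathcal N_k^s$. The map $Q_k$ is odd and continuous by Lemma \ref{r.cont} and Proposition \ref{H.homeo}. Hence $K_k:=Q_k(K)$ is compact and symmetric, and by monotonicity of the genus under odd continuous maps, $\text{gen}(K_k)\ge\text{gen}(K)\ge i$. So $K_k\in\mathcal K_{i,k}^s$ and $c_{i,k}\ge\inf_{K_k}\G$.

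It remains to show that $\sup_{u\in K}\|Q_ku-u\|_{s,M}\to0$ as $\eta\to0$ and $k\to\infty$, and that $\G$ is uniformly continuous on a neighborhood of $K$. This is the main obstacle, since $r(\cdot)$ is only known to be continuous, not Lipschitz. We argue by contradiction. Suppose there are $u_m\in K$ and $\eta_m\to0$ with $\|Q_{k_m}u_m-u_m\|\ge\epsilon_0$. By compactness, pass to a subsequence with $u_m\to u\in K$. Then $P_{k_m}u_m\to u$ in norm, and Lemma \ref{r.cont} (after normalizing, using that $u\mapsto u/\|u\|_{s,M}$ is continuous away from $0$) gives $r(P_{k_m}u_m)\to r(u)=1$, because $u\in\mathcal N^s$. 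Hence $Q_{k_m}u_m\to u$, a contradiction. Finally, $\G$ is norm continuous on $W^s_0E_M(\Omega)$ by the growth condition \eqref{condg} and the compact immersion into $E_M(\Omega)$; in particular it is uniformly continuous on the compact set $K$ and near it. A second compactness argument of the same type then gives $\inf_{K_k}\G\ge\inf_K\G-\varepsilon$ for $k$ large. Therefore $c_{i,k}\ge c_i-2\varepsilon$ eventually, and $c_{i,k}\to c_i$.
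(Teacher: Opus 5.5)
Your argument is correct. It is essentially the standard Galerkin-type proof of \cite{Ti} and \cite{FBSP}, to which the paper defers instead of giving its own: the inclusions $\mathcal K_{i,k}^s\subset\mathcal K_{i,k+1}^s\subset\mathcal K_i^s$ give monotone convergence to a limit at most $c_i$, and an odd Schauder-type approximation of a nearly optimal $K\in\mathcal K_i^s$ by maps into $V_k$, radially renormalized onto $\mathcal N_k^s$ via $r(\cdot)$, gives the reverse inequality.

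One minor simplification: that $P_k$ does not vanish on $K$ already follows from compactness of $K$ and $0\notin K$. So you need not rely on Lemma~\ref{invH.cont}, whose lower bound is only sketched in the paper.
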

\begin{lema}\label{cito0}
$c_i\to 0$ as $i\to \infty$.
\end{lema}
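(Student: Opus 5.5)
We prove Lemma \ref{cito0}, that $c_i\to 0$ as $i\to\infty$, by the classical genus argument of Ljusternik--Schnirelmann type, in the form used in \cite{FBSP, Ti}. We exploit two facts: $\mathcal N^s$ is bounded in $W^s_0L_M(\Omega)$ (Lemma \ref{invH.cont}), and $\G$ is weakly sequentially continuous, via the compact immersion $W^s_0L_M(\Omega)\subset\subset E_M(\Omega)$ and the growth condition \eqref{condg}. First observe that $c_i$ is nonincreasing in $i$, since $\mathcal K^s_{i+1}\subset\mathcal K^s_i$, and that $c_i\ge 0$. So $c_i\to c\ge 0$, and we must rule out $c>0$.

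Suppose $c>0$. The idea is to use a sequence of finite codimensional subspaces. Since $W^{-s}E_{\bar M}(\Omega)$ is separable, pick a sequence $\{f_j\}$ in it that is total over $W^s_0L_M(\Omega)$, in the sense that $\langle f_j,u\rangle=0$ for all $j$ forces $u=0$; such a sequence exists by the complementary system \eqref{CS}. Set $X_i:=\{u\in W^s_0E_M(\Omega)\colon \langle f_j,u\rangle=0,\ j=1,\dots,i-1\}$. This space has codimension at most $i-1$. By the standard intersection property of the Krasnoselskii genus (\cite[Chapter 7]{Rabi}), every $K\in\mathcal K_i^s$ meets $X_i$. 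To see this, consider the odd continuous map $K\to\R^{i-1}$, $u\mapsto(\langle f_j,u\rangle)_{j<i}$. If $K\cap X_i=\emptyset$, this map would take values in $\R^{i-1}\setminus\{0\}$, which forces $\text{gen}(K)\le i-1$. Hence
$$
c_i\le \beta_i:=\sup\{\G(u)\colon u\in \mathcal N^s\cap X_i\}.
$$

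It remains to show $\beta_i\to 0$. Choose $u_i\in\mathcal N^s\cap X_i$ with $\G(u_i)\ge \beta_i-1/i$. By Lemma \ref{invH.cont}, $\{u_i\}$ is bounded in $W^s_0L_M(\Omega)$. By the complementary system structure, a subsequence converges to some $u$ in $\sigma(W^s_0L_M(\Omega),W^{-s}E_{\bar M}(\Omega))$. For each fixed $j$ we have $\langle f_j,u_i\rangle=0$ as soon as $i>j$, so $\langle f_j,u\rangle=0$ for every $j$, and therefore $u=0$. The stated continuity property of $\G$ under this weak convergence, which comes from the compact immersion, gives $\G(u_i)\to\G(0)=0$. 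Hence $\beta_i\to0$ and $c=0$, a contradiction.

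The main obstacle is the genus-intersection step in the non-reflexive, nonseparable setting. We must make sure the maps $u\mapsto\langle f_j,u\rangle$ are continuous on $\mathcal N^s\subset W^s_0E_M(\Omega)$ in norm. This holds because $f_j\in W^{-s}E_{\bar M}(\Omega)\subset (W^s_0E_M(\Omega))^*$. We also need total families to exist, which follows from the separability of $W^{-s}E_{\bar M}(\Omega)$ and the duality $(W^{-s}E_{\bar M})^*=W^s_0L_M$. A secondary point is that weak limits are taken in $W^s_0L_M(\Omega)$ rather than in $W^s_0E_M(\Omega)$. This causes no problem, since $\G$ is defined and weakly continuous on all of $W^s_0L_M(\Omega)$.
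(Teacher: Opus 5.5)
The paper does not prove this lemma itself; it refers to \cite{FBSP} and \cite{Ti}. Your argument is the standard codimension--genus argument used there, and it is correct: you slice $\mathcal N^s$ by the finite-codimensional subspaces cut out by a dense sequence in the separable predual $W^{-s}E_{\bar M}(\Omega)$, use the genus intersection property to get $c_i\le\beta_i$, and then obtain $\beta_i\to0$ from weak-$*$ compactness together with the sequential weak-$*$ continuity of $\G$, which follows from the compact immersion into $E_M(\Omega)$ and \eqref{condg}.

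Two small points could be tightened. First, choosing $\G(u_i)\ge\beta_i-1/i$ tacitly assumes $\beta_i<\infty$; since you argue by contradiction, simply pick $u_i\in\mathcal N^s\cap X_i$ with $\G(u_i)\ge\varepsilon$ for a fixed $\varepsilon>0$, which is possible because $\beta_i\ge c_i$ stays away from $0$. Second, the boundedness of $\mathcal N^s$ is most safely taken from the direct coercivity bound $\iint_{\R^{2n}}M(cD^su/2)\,d\nu_n\le 1/\theta$ used in the proof of Theorem \ref{teo.main}, rather than from the proof of Lemma \ref{invH.cont}: that proof relies on Step 1 of Lemma \ref{r.cont}, which only handles norm-convergent sequences.
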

We can give the proof of the main result.
\begin{teo}\label{teo.main}
Let $\Omega\subset \R$ be an open and bounded domain which satisfies the segment property. Then there exists  a sequence of eigenpairs $\{(\lambda_i,u_i)\}_{i=1}^\infty\subset \R_+\times W_0^sL_M(\Omega)$  of \eqref{mainproblem}. Moreover $\lambda_i\to+\infty$ and $u_i\to 0$ in $\sigma(W_0^sL_M(\Omega), W^{-s}E_{\bar{M}}(\Omega))$ as $i\to \infty$. 	
\end{teo}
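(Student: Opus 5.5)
Existence of the sequence of eigenpairs is already provided by Theorem \ref{teolambdaiui}: for each $i\in\N$ it yields $(\lambda_i,u_i)\in(0,+\infty)\times\Dom(\A)$ solving \eqref{mainproblem}, with $\EE(u_i)=1$ and $\G(u_i)=\lim_{k}c_{i,k}$. By Lemma \ref{ciktoci} this gives $\G(u_i)=c_i$, and Lemma \ref{cito0} then gives $\G(u_i)\to 0$ as $i\to\infty$. What remains is to turn this into the two asymptotic claims.

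\emph{Step 1: $u_i\to 0$.} Since $\EE(u_i)=1$, the coercivity \eqref{coersive} together with $A(x,y,\xi)\ge$ (something comparable to $M$) bounds the modular, so $\{u_i\}$ is bounded in $W^s_0L_M(\Omega)$. One could also use Lemma \ref{invH.cont} after weak-$*$ approximation. Hence every subsequence has a further subsequence converging in $\sigma(W_0^sL_M(\Omega),W^{-s}E_{\bar M}(\Omega))$ to some $w$. By the compact embedding $W^s_0L_M(\Omega)\subset\subset E_M(\Omega)$ and the continuity of $\G$ along such sequences (stated before \eqref{eigenfinito}), we get $\G(w)=\lim\G(u_i)=0$. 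Since $\G$ vanishes only at $0$ ($g$ is odd with $g>0$ on $(0,\infty)$, so $G>0$ away from $0$), $w=0$. Therefore the whole sequence converges to $0$ in that topology.

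\emph{Step 2: $\lambda_i\to\infty$.} Test \eqref{solmain} with $v=u_i$:
$$
\iint_{\R^{2n}}a(x,y,D^su_i)D^su_i\,d\nu_n=\lambda_i\int_\Omega g(u_i)u_i\,dx .
$$
By \eqref{cotaA} the left side is at least $\EE(u_i)=1$, so $\lambda_i\ge 1/\int_\Omega g(u_i)u_i\,dx$. It thus suffices to show $\int_\Omega g(u_i)u_i\,dx\to0$. From Step 1 and the compact embedding, $u_i\to0$ in norm in $E_M(\Omega)$ along subsequences, and in fact $\int_\Omega M(\kappa u_i)\to0$ for every $\kappa$. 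Split via Young's inequality exactly as in the well-definedness computation after \eqref{solmain}:
$$
\int_\Omega|g(u_i)u_i|\le\int_\Omega e|u_i|+a_1\Big(ka_2\int_\Omega M(a_3|u_i|)+k\int_\Omega M(|u_i|/k)\Big).
$$
The first term tends to $0$ because $u_i\to0$ in norm in $E_M$ and $e\in L_{\bar M}$ (Hölder inequality). The remaining terms tend to $0$ by modular convergence. Hence $\lambda_i\to\infty$; since this holds along every subsequence, it holds for the full sequence.

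The main obstacle I expect is justifying the strong (modular) convergence $\int_\Omega M(\kappa u_i)\to0$ for all $\kappa$ from only weak-$*$ convergence without $\Delta_2$. I would invoke the compact embedding into $E_M(\Omega)$ (Theorem B.3 in \cite{FBSP}), which gives norm convergence in $E_M$. Norm convergence in $E_M$ implies modular convergence for every multiple, since $\|\kappa u_i\|_M\to0$ forces $\int M(\kappa u_i)\le\|\kappa u_i\|_M$ once the norm is below $1$.
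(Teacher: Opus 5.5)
Your proposal is correct and follows the paper's route. The paper likewise gets $\G(u_i)=c_i\to0$ from Theorem \ref{teolambdaiui} and Lemmas \ref{ciktoci}--\ref{cito0}, gets boundedness from $\EE(u_i)=1$ and \eqref{coersive}, identifies the weak-$*$ limit as $0$ through $\G$, and obtains $\lambda_i\ge 1/\int_\Omega g(u_i)u_i\,dx$ from \eqref{cotaA}. Your vague bound should be stated as in the paper, $A(x,y,\xi)\ge a(x,y,\xi/2)\,\xi/2\ge\theta M(c\xi/2)$ for $\xi>0$, and your subsequence argument and Young-inequality/compact-embedding proof that $\int_\Omega g(u_i)u_i\,dx\to0$ just make explicit what the paper leaves implicit.
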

\begin{proof}
Theorem \ref{teolambdaiui}, Lemma \ref{ciktoci} and Lemma \ref{cito0} imply that there exists a sequence of eigenpairs $\{(\lambda_i,u_i)\}_{i=1}^\infty\subset \R_+\times W_0^sL_M(\Omega)$ for the problem \eqref{mainproblem} and $\G(u_i)\to 0$ if $i\to\infty$. Using Theorem \ref{teolambdaiui} and \eqref{coersive} we have
\begin{align*}
1=&\EE(u_i)
\\
=&\iint_{\R^{2n}}\int_0^{D^s u_i}a(x,y,\tau)\,d\tau\,d\nu_n
\\
\geq & \iint_{\R^{2n}}a \left (x,y,\frac{D^su_i}{2}\right)\frac{D^su_i}{2}\,d\nu_n	
\\
\geq & \iint_{\R^{2n}}\theta M\left (c \frac{D^s u_i}{2}\right )\,d\nu_n, 
\end{align*}
where $\theta,c>0$ are constants independent of $i$. Therefore we have
$$
\iint_{\R^{2n}} M\left (c \frac{D^s u_i}{2}\right )\,d\nu_n\leq \frac{1}{\theta},
$$
from where we obtain
$$
\|D^su_i\|_{M,\nu_n}\leq \frac{2 \max\{1, 1/\theta\}}{c},
$$
and so the sequence $\{u_i\}_{i=1}^\infty$ is uniformly bounded in $W_0^s L_M(\Omega)$ then there exists $\tilde{u}\in W_0^sL_M(\Omega)$ such that $u_i\to \tilde{u}$ in $\sigma(W_0^sL_M(\Omega), W^{-s}E_{\bar{M}}(\Omega))$ and $\G(u_i)\to \G(\tilde{u})$ as $i\to\infty$. From the above we have $\tilde{u}=0$ and therefore $u_i\to 0$ in $\sigma(W_0^sL_M(\Omega), W^{-s}E_{\bar{M}}(\Omega))$. 	

Finally, using the fact that for each $i$ the pair $(\lambda_i,u_i)$ is solution for \eqref{mainproblem}, the monotonicity condition \eqref{MC} of $a$ and Theorem \ref{teolambdaiui} we conclude
$$
\lambda_i=\frac{\iint_{\R^{2n}}a(x,y,D^su_i)D^su_i\,d\nu_n}{\int_\Omega g(u_i)u_i\,dx}\geq \frac{\EE (u_i)}{\int_\Omega g(u_i)u_i\,dx}= \frac{1}{\int_\Omega g(u_i)u_i\,dx},
$$
then $\lambda_i\to+\infty$ as $i\to\infty$.
\end{proof}

\section*{Acknowledgements}
This work benefited from the visit of the first author to the National University of San Luis and from the visit of the second author to the University of Buenos Aires. The authors are grateful to both institutions for their hospitality.

The first and third authors are members of CONICET, while the second author is a doctoral fellow of CONICET.

Unfortunately, research funding in Argentina has undergone significant cuts by the national government, and therefore no financial support was available for this work.

\bibliographystyle{amsplain}
\bibliography{biblio}
\end{document}